\begin{document}
\bibliographystyle{plainnat}
\setcitestyle{numbers}

\title{Logarithmic asymptotics for multidimensional extremes under non-linear scalings}

\author{K.M.\ Kosi\'nski}
\address{Korteweg-de Vries Institute for Mathematics,
University of Amsterdam, the Netherlands; E{\sc urandom},
Eindhoven University of Technology}
\email{K.M.Kosinski@uva.nl}
\thanks{KK was supported by NWO grant 613.000.701.}

\author{M.\ Mandjes}
\address{Korteweg-de Vries Institute for Mathematics,
University of Amsterdam, the Netherlands; E{\sc urandom},
Eindhoven University of Technology, the Netherlands; CWI,
Amsterdam, the Netherlands}
\email{M.R.H.Mandjes@uva.nl}

\subjclass[2010]{Primary: 60F10; Secondary: 60G70.}

\keywords{Extrema of stochastic process, large deviation theory.}

\date{\today}

\begin{abstract}
Let $\bs W=\{\bs W_n:n\in\nn\}$ be a sequence of random vectors in $\rr^d$, $d\ge 1$. This paper considers the logarithmic asymptotics of the extremes of $\bs W$, that is, for any vector $\bs q>\0$ in $\rr^d$, we find
\[
\log\prob{\exists{n\in\nn}:\bs W_n> u \bs q}, \as u.
\]
We follow the approach of the restricted large deviation principle introduced in \citet{Duffy03}. That is, we assume that, for every $\bs q\ge\0$, and some scalings $\{a_n\},\{v_n\}$, $\frac{1}{v_n}\log\prob{\bs W_n/a_n\ge u \bs q}$ has a, continuous in $\bs q$, limit $J_{\bs W}(\bs q)$. We allow the scalings $\{a_n\}$ and $\{v_n\}$ to be regularly varying with a positive index. This approach is general enough to incorporate sequences $\bs W$, such that the probability law of $\bs W_n/a_n$ satisfies the large deviation principle with continuous, not necessarily convex, rate functions.
The formula for these asymptotics agrees with the seminal papers on this topic \citep{Glynn94,Duffield95,Collamore96,Duffy03}.
\end{abstract}

\maketitle

\section{Introduction}
\label{sec:introd}
Let $W=\{W_n:n\in\nn\}$ be a sequence of random variables taking values in $\rr$. Define $Q=\sup_{n\ge 0} W_n$. The random variable $Q$ has been extensively studied: if $W$ is time-reversible, then $Q$ has the same distribution as the steady-state workload distribution in a queue with free process $W$ (see, e.g., \citep{Reich58}); $Q$ has also various relations with finance and insurance risk. It is in general hard to determine the distribution of $Q$. One could therefore settle for the less ambitious goal of identifying the corresponding tail asymptotics, that is, finding a function $f$, such that $\prob{Q>u}\sim f(u)$, as $u\toi$ (i.e., the ratio of the two tends to 1 as $u\toi$). This, however, requires to impose a quite restrictive structure on $W$, even in the Gaussian setting. Therefore, one usually resorts to determining the \textit{logarithmic} asymptotics of the (right) tail of the distribution of $Q$. It has been observed that, in great generality,
\begin{equation}
\label{eq:heuristics}
\log\prob{Q>u}=\log\prob{\exists{n\in\nn}:W_n>u}
\sim
\log\sup_{n\ge0}\prob{W_n>u},\as u.
\end{equation}
The heuristic behind this claim is the \textit{principle of the largest term}: rare events occur in the most likely way. That is to say, if $W$ is unlikely to ever reach level $u$, then conditional on $W$ in fact reaching $u$, with overwhelming probability this happens near to the most likely epoch.

Using \eqref{eq:heuristics}, the tail behavior of $Q$ can be derived from the large deviation behavior of $W$; this was originally proposed by \citet{Kesidis93} and later made rigorous by \citet{Glynn94}. More formally, let $\Lambda$ be the limiting \textit{cumulant generating function} (CGF) of $W$, that is $\Lambda(\theta)=\lim_{n\toi}n^{-1}\log\ee\exp(\theta W_n)$, when the limit exists, and let $\Lambda$ satisfy the assumptions of the G\"artner--Ellis theorem. Then, by its virtue, the sequence of probability measures $\{\mu_n:n\in\nn\}$, where $\mu_n$ is the law of $W_n/n$, satisfies the large deviation principle (LDP) with rate function $\Lambda^*$, the Fenchel--Legendre transform of $\Lambda$ (also known as the convex conjugate of $\Lambda$);  see, \citep{Dembo98} for background on large deviations theory. Under these assumptions, \citep{Glynn94} asserts that
\begin{equation}
\label{eq:Glynn}
\lim_{u\toi}\frac{1}{u}\log\prob{\exists{n\in\nn}:W_n>u}=-\sup\{\theta:\Lambda(\theta)\le 0\}.
\end{equation}
Owing to its generality, this result is useful in a broad range of applications.

The result in \citep{Glynn94} has been extended in a notable paper by \citet{Duffield95} (see also \cite{Lelarge08}). The authors consider the logarithmic asymptotics of the tail of $Q$, by imposing assumptions on random variables $W_n/a_n$, where
$\{a_n:n\in\nn\}$ is some (non-necessarily linear) scaling. It is assumed that the \textit{scaled} limiting CGF of $W$, defined as $\Lambda(\theta)=\lim_{n\toi}v_n^{-1}\log\ee\exp(\theta v_n W_n/a_n)$ for some sequence $\{v_n:n\in\nn\}$, exists as an extended real number. The considered class of admissible scaling sequences is quite broad. It includes the case when $a\in\rv(A)$, $v\in\rv(V)$ are two regularly varying sequences with indexes $A,V>0$; this class is broad enough for most of the applications. Considering non-linear scalings allows to incorporate, for instance, long/short range dependent sequences corresponding to, for example, fractional Brownian motion. Similarly to \citet{Glynn94}, it is assumed that $\Lambda$ meets the assumptions of the G\"artner--Ellis theorem. Consequently, the sequence of probability measures $\{\mu_n:n\in\nn\}$, where $\mu_n$ is the law of $W_n/a_n$, satisfies the LDP with \textit{speed} $v_n$ and rate function $\Lambda^*$ (i.e., the convex conjugate of $\Lambda$). The main result from \citep{Duffield95} states that, under some additional assumptions,
\begin{equation}
\label{eq:Duffield}
\lim_{u\toi}\frac{1}{h(u)}\log\prob{\exists{n\in\nn}:W_n>u}=-\inf_{c>0} c^{-V/A}\Lambda^*(c),
\end{equation}
where $h= v\circ a^{-1}$ and $a^{-1}$ denotes the right inverse of $a$. It can be verified that \eqref{eq:Glynn} follows from \eqref{eq:Duffield} in case of the linear scaling: $a_n=v_n=n$.

Both \citep{Glynn94,Duffield95} impose additional conditions on $W$ only to infer that the sequence of probability measures $\{\mu_n:n\in\nn\}$, whether $\mu_n$ is the law of $W_n/n$ or $W_n/a_n$, satisfies the LDP with some well-behaved rate function. By exploiting the G\"artner--Ellis theorem, the considered class of possible rate functions is limited to convex functions, whereas, in general, it could be any lower semi-continuous function. Alternatively  therefore, one could assume that the LDP holds without any knowledge of how it was inferred. \citet{Duffy03} suggested a variant of this approach that allows to consider scaled sequences $W$ with nonconvex rate functions. More formally, the authors proposed the \textit{restricted} large deviation principle (RLDP) instead of the classical LDP. That is, they required that the limit
\begin{equation}
\label{eq:rldp}
\lim_{n\toi}\frac{1}{v_n}\log\prob{W_n/a_n>c}=-J_W(c)
\end{equation}
exists for every $c\ge 0$ and the function $J_W$ is continuous on the interior of the set upon which it is finite. Thus, if the LDP holds with a continuous (where finite) rate function $I_W$ (and speed $v_n$), then the RLDP holds with $J_W(c)=\inf_{x\ge c}I_W(x)$. 
Nevertheless it is not required that $I_W$ has been inferred from the G\"artner--Ellis theorem, nor, in principle, that the LDP holds at all. Observe that $J_W$ does not give any information about the behavior of $W$ for negative values so that in general the LDP is not even a prerequisite for the RLDP to hold. The main result from \citep{Duffy03}, under some additional assumptions, reads
\begin{equation}
\label{eq:Duffy}
\lim_{u\toi}\frac{1}{h(u)}\log\prob{\exists{n\in\nn}:W_n>u}=-\inf_{c>0} c^{-V/A}J_W(c).
\end{equation}
It can be verified, that if $J_W(c)=\inf_{x\ge c}I_W(x)$ and $I_W$ is convex, then \eqref{eq:Duffy} reduces to \eqref{eq:Duffield} in the special case when $I_W=\Lambda^*$.

Note that \eqref{eq:rldp} is a statement about the limiting behavior of $\prob{W_n/a_n>c}$ with $n$ growing large. This condition does not extract any information about $W$ for specific values of $n$, in particular the initial values of $W$. The distribution of $Q$ however, and hence the asymptotics as well, does involve the whole sequence $W$. It is therefore possible that the asymptotics of a \textit{single} $W_n$ could dominate those of $Q$. For instance, one can greatly alter $Q$ by simply substituting $W_0$ with a properly chosen heavy-tailed random variable $\tilde W_0$. To exclude such a scenario, \citet{Duffy03} introduced an additional, novel assumption referred to as the \textit{uniform individual decay rate} hypothesis; see \autoref{sec:MA}. Roughly speaking it prevents the sequence $W$ from having an ``unusual'' behavior for a single $W_n$. In fact, \citep[Section 4]{Duffy03} points out that this issue was overlooked by \citep{Duffield95}. That is, in order for the results from \citep{Duffield95} to hold (i.e., Equation \eqref{eq:Duffield}), one actually needs to impose further conditions. In the light of the generality of the result by \citet{Duffy03}, their paper can be treated as the most up to date treatment of the subject of logarithmic asymptotics for the supremum of a stochastic sequence.

In this paper we generalize and extend the result from \citep{Duffy03} in multiple ways. Firstly, in \autoref{thm:LDP} we show that, under the assumptions of \citep{Duffy03}, the sequence of probability measures $\{\mu_u^W:u\in\rr_+\}$, where $\mu_u^W(A)=\prob{\exists{n\in\nn}: W_n\in u A}$, satisfies the LDP with speed $h(u)$ and rate function $\tilde I_W$, such that $\tilde I_W(x)=x^{V/A}$, for $x\ge 0$, and $\tilde I_W(x)=\infty$, for $x<0$. In particular, for any $A\in\mathcal B(\rr_+)$,
\begin{equation}
\label{eq:Kos2}
\lim_{u\toi}\frac{1}{h(u)}\log\prob{\exists{n\in\nn}: W_n\in u A}=-
\inf_{x\in A} x^{V/A}\inf_{c>0} c^{-V/A}J_W(c).
\end{equation}
One can see that \eqref{eq:Kos2} extends \eqref{eq:Duffy} by setting $A=(1,\infty)$. \autoref{thm:LDP} is presented in \autoref{sec:extensionLDP}.

Furthermore, in \autoref{sec:MLDP} we allow the sequence $W$ to take values in $\rr^d$ for any $d\ge1$, rather than just $\rr$. As it turns out, this multidimensional setting imposes substantial additional challenges, as compared to the single-dimensional setting. Regarding notation, to explicitly distinguish the multidimensional case from the one-dimensional counterpart we will make use of the usual boldface fonts. That is, we write $\bs x$ for the vector $\bs x=(x_1,\ldots,x_d)$, where the dimension $d$ should be clear from the context. All vector relations should be understood coordinate-wise; for instance we write $\bs v\ge \bs w$ to mean $v_i\ge w_i$, for all $i = 1,\ldots,d$. With this notation, we consider a sequence $\bs W=\{\bs W_n:n\in\nn\}$ of random vectors in $\rr^d$. $\bs W$ is assumed to satisfy  multidimensional analogues of the assumptions from \citep{Duffy03}; see \autoref{sec:MA}. In particular, it is assumed that the (multidimensional) RLDP holds, that is, the limit
\[
\lim_{n\toi}\frac{1}{v_n}\log\prob{\bs W_n/a_n>\bs q}=-J_{\bs W}(\bs q),
\]
exists for any $\bs q\ge\bs 0$. Our second contribution, \autoref{thm:Mcase}, states that, for any vector $\bs q>\bs 0$,
\begin{equation}
\label{eq:Kos1}
\lim_{n\toi}\frac{1}{h(u)}\log\prob{\exists n\in\nn\:: \bs W_n>u\bs q}=-\inf_{c>0} c^{-V/A}J_{\bs W}(c\bs q).
\end{equation}
Obviously \eqref{eq:Kos1} is a generalization of \eqref{eq:Duffy} in the multidimensional sense.

Another significant contribution is that we also discuss various relations and connections with the existing literature. In \autoref{sec:sCGF} we present the relation between the RLDP approach, as undertaken here and in \citet{Duffy03}, and the approach via the cumulant generating functions, as undertaken in \citep{Glynn94,Duffield95}. In \autoref{sec:Collamore}, we discuss various results by \citet{Collamore96}, who considered a sequence of random vectors $\{\bs Y_n:n\in\nn\}$ in $\rr^d$, such that the sequence of probability measures $\{\mu_n:n\in\nn\}$, where $\mu_n$ corresponds to the law of $\bs Y_n/n$, satisfies the LDP with a convex rate function. He proved various LDP-like statements for the sequence of probability measures $\{\mu_u^{\bs Y,N}:u\in\rr_+\}$, where $\mu_u^{\bs Y,N}(A)=\prob{\exists{n\ge N}: \bs Y_n\in u A}$. His results, not referred to in \citep{Duffy03}, coincide with \eqref{eq:Duffy}--\eqref{eq:Kos1} in the case of $N=1$, linear scaling and convex rate function. We provide a discussion of these results also in \autoref{sec:Collamore}. 

We conclude our paper in \autoref{sec:examples}, where we present an extension of \autoref{thm:Mcase} from sequences $\{\bs W_n:n\in\nn\}$ to stochastic processes $\{\bs W_t:t\in\rr_+\}$. Furthermore, we apply our results to two examples. In the first one we treat heavy tailed processes which exhibit nonconvex rate functions, an example that was not covered by results that were known so far. In the second example, we compare our results with \citep{Debicki10b}, which addresses a similar problem for the case of $\bs W$ being Gaussian.

\section{Preliminaries}
In this paper we use the following notation. For a function $f:\rr^d\to\rr$ we denote its domain by $\dom f=\{\bs x:f(\bs x)<\infty\}$. As already introduced in \autoref{sec:introd}, we shall work with the following two functions $a,v:\rr_+\to\rr_+$, which throughout the whole paper are assumed to be regularly varying functions at infinity with positive indexes $A$ and $V$, respectively; we write $a\in\RV(A)$, $v\in\RV(V)$. It is well known that, for any regularly varying function $f\in\RV(F)$ with a positive index $F$, it is possible to construct a strictly increasing and continuous function $f'$ such that
\[
\lim_{t\toi}\frac{f'(ct)}{f'(t)}=\lim_{t\toi}\frac{f(ct)}{f(t)}
=\lim_{t\toi}\frac{f'(ct)}{f(t)}=c^F.
\]
Therefore, without loss of generality, we assume that both $a$ and $v$ are continuous and strictly increasing. We shall also speak about regularly varying sequences $a_n$ and $v_n$ defined by $a_n=a(n)$ and $v_n=v(n)$. Let $a^{-1}$ denote the inverse of $a$. Define a new function $h:\rr_+\to\rr_+$ by $h=v\circ a^{-1}$. The function $h$ belongs to the class $\RV(V/A)$. For details on regular variation, see, e.g., \citep{Bingham87}.

For any subset $A$ of $\rr^d$ we denote its cone by $\cone(A)=\{\lambda\bs x:\bs x\in A,\lambda\ge 0\}$, its closure by $\bar A$ and interior by $A^\circ$. For any convex function $f$ on $\rr^d$ we define its Fenchel--Legendre transform $f^*$ as $f^*(\bs x)=\sup_{\bs\alpha\in\rr^d}\left(\scal{\bs\alpha}{\bs x} - f(\bs\alpha)\right)$.

\subsection{Large deviations theory}
\label{sec:LDP}
We follow the definitions and setup as used in \citet{Dembo98}. All probability measures in this paper are assumed to be Borel measures. The function $I:\rr^d\to[0,\infty]$ is called a \textit{rate function} if $I$ is lower semi-continuous and $I\not\equiv\infty$. We say that $I$ is a \textit{good} rate function if, in addition, the level sets $\mathscr L_a I=\{\bs x:I(\bs x)\le a\}$ are compact for each $a\ge 0$ (which in fact is equivalent to the level sets being bounded as a function $f$ is lower semi-continuous if and only if its level sets are closed). A sequence of probability measures $\{\mu_n:n\in\nn\}$ on $\rr^d$ is said to satisfy the \textit{large deviation principle} (LDP) with \textit{rate function} $I$ if, for all $\Gamma\in\mathcal B(\rr^d)$,
\begin{equation}
\label{eq:LDPdef}
-\inf_{x\in \Gamma^\circ} I(x)
\le
\liminf_{n\toi}\frac{1}{n}\log\mu_n(\Gamma)
\le 
\limsup_{n\toi}\frac{1}{n}\log\mu_n(\Gamma)
\le
-\inf_{x\in \bar \Gamma} I(x).
\end{equation} 

Consider the empirical mean $\bs Z_n=\frac{1}{n}\sum_{i=1}^n\bs X_i$ of a sequence of i.i.d.~random vectors $\{\bs X_n:n\in\nn\}$ in $\rr^d$. Define $\mu_n$ as the law of $\bs Z_n$ and let $\Lambda_X=\log\ee e^{\scal{\bs\alpha}{\bs X_1}}$ be the \textit{cumulant generating function} (CGF) associated to the law of $\bs X_1$. In this case the classical theorem of Cram\'er applies.
\begin{theorem}[Cram\'er's theorem]
Assume that $0\in\intt{\dom {\Lambda_X}}$, then $\{\mu_n:n\in\nn\}$ satisfies the LDP on $\rr^d$ with good rate function $\Lambda_X^*$.
\end{theorem}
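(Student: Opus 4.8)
The plan is to run the classical argument behind Cram\'er's theorem in $\rr^d$; this is a textbook result (see \citep{Dembo98}), so I only indicate the main lines. Write $\nu$ for the law of $\bs X_1$, so that $e^{\Lambda_X(\bs\alpha)}=\ee e^{\scal{\bs\alpha}{\bs X_1}}$. First I would record the structure of the rate function: $\Lambda_X$ is convex (H\"older), lower semicontinuous (Fatou), and satisfies $\Lambda_X(\0)=0$ and $\Lambda_X>-\infty$, hence is proper convex; consequently $\Lambda_X^*$ is proper, convex and lower semicontinuous, with $\Lambda_X^*\ge-\Lambda_X(\0)=0$, so it is a rate function. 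To see that it is a \emph{good} one I would invoke the hypothesis $\0\in\intt{\dom{\Lambda_X}}$: choosing $\varepsilon>0$ with $\{\bs\alpha:\|\bs\alpha\|\le\varepsilon\}\subseteq\dom{\Lambda_X}$ and $M:=\sup_{\|\bs\alpha\|\le\varepsilon}\Lambda_X(\bs\alpha)<\infty$ (finite by convexity), the choice $\bs\alpha=\varepsilon\bs x/\|\bs x\|$ in the Fenchel--Legendre transform gives $\Lambda_X^*(\bs x)\ge\varepsilon\|\bs x\|-M$, so the level sets $\mathscr L_a\Lambda_X^*$ are bounded and, being closed, compact.

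For the upper bound the only ingredient is the Chebyshev--tilting estimate $\mu_n(\{\bs y:\scal{\bs\alpha}{\bs y}\ge c\})\le e^{-n(c-\Lambda_X(\bs\alpha))}$, immediate from independence. I would deduce the bound for a compact $K$ by choosing, for each $\bs x\in K$ and each $\delta>0$, a vector $\bs\alpha_{\bs x}$ with $\scal{\bs\alpha_{\bs x}}{\bs x}-\Lambda_X(\bs\alpha_{\bs x})\ge\min\{\Lambda_X^*(\bs x),1/\delta\}-\delta$; a small ball $B_{\bs x}\ni\bs x$ then has $\mu_n(B_{\bs x})\le e^{-n(\min\{\Lambda_X^*(\bs x),1/\delta\}-2\delta)}$, and a finite subcover together with the principle of the largest term gives $\limsup_n n^{-1}\log\mu_n(K)\le-\inf_K\Lambda_X^*$ after letting $\delta\downarrow0$. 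To upgrade this to all closed sets I would prove exponential tightness: the same estimate with $\bs\alpha=\pm\varepsilon\bs e_i$ yields $\mu_n(\rr^d\setminus[-\rho,\rho]^d)\le 2d\,e^{-n(\varepsilon\rho-M)}$, whose decay rate tends to $\infty$ with $\rho$; splitting a closed $F$ as $(F\cap[-\rho,\rho]^d)\cup(\rr^d\setminus[-\rho,\rho]^d)$ and letting $\rho\toi$ then finishes the upper bound.

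For the lower bound I would reduce, as usual, to showing $\liminf_n n^{-1}\log\mu_n(G)\ge-\Lambda_X^*(\bs x)$ for every open $G$ and every $\bs x\in G$. The core device is an exponential change of measure: for $\bs\eta\in\intt{\dom{\Lambda_X}}$ tilt $\nu$ by $e^{\scal{\bs\eta}{\cdot}-\Lambda_X(\bs\eta)}$; since $\Lambda_X$ is smooth on $\intt{\dom{\Lambda_X}}$ (differentiation under the integral is legitimate there), the tilted law has mean $\bs x_{\bs\eta}:=\nabla\Lambda_X(\bs\eta)$ and $\Lambda_X^*(\bs x_{\bs\eta})=\scal{\bs\eta}{\bs x_{\bs\eta}}-\Lambda_X(\bs\eta)$. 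If $\bs x_{\bs\eta}\in G$, then by the weak law of large numbers the tilted empirical-mean law concentrates on any ball $B(\bs x_{\bs\eta},\delta)\subseteq G$, and reversing the tilt (whose Radon--Nikodym factor depends on the sample only through its empirical mean) gives $\mu_n(G)\ge e^{-n(\Lambda_X^*(\bs x_{\bs\eta})+\|\bs\eta\|\delta)}\,\tilde\mu_n(B(\bs x_{\bs\eta},\delta))$; letting $n\toi$ for fixed $\delta$ and then $\delta\downarrow0$ yields $\liminf_n n^{-1}\log\mu_n(G)\ge-\Lambda_X^*(\bs x_{\bs\eta})$. This settles the bound at every $\bs x$ of the form $\nabla\Lambda_X(\bs\eta)$ with $\bs\eta\in\intt{\dom{\Lambda_X}}$.

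The hard part will be the lower bound at the remaining points of $\dom{\Lambda_X^*}$: those on its relative boundary, and the ``non-steep'' points where the supremum defining $\Lambda_X^*(\bs x)$ is attained only on the boundary of $\dom{\Lambda_X}$ (or in the limit $\|\bs\eta\|\toi$). My plan here is the standard two-step reduction: first pass to the affine hull of the support of $\nu$ --- on which $\Lambda_X^*$ restricts correctly and the dimension strictly drops, so one may assume the support affinely spans $\rr^d$ --- and then approximate by truncating $\bs X_1$ to large balls, which renders the truncated log-moment generating function steep so that the tilting construction above reaches a dense set of means, and pass to the limit using continuity of the convex function $\Lambda_X^*$ along segments of its domain. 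Combining the contributions gives $\liminf_n n^{-1}\log\mu_n(G)\ge-\inf_G\Lambda_X^*$ for all open $G$, which together with the upper bound is the asserted LDP.
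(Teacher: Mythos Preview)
The paper does not prove this statement at all: Cram\'er's theorem is quoted in \autoref{sec:LDP} purely as background, with an implicit reference to \citep{Dembo98}, and no argument is supplied. So there is nothing in the paper to compare your proposal against.

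That said, your outline is the standard textbook proof (essentially the one in \citep{Dembo98}): goodness of $\Lambda_X^*$ from coercivity via the ball in $\intt{\dom{\Lambda_X}}$; the upper bound by Chernoff tilting, a finite subcover on compacts, and exponential tightness to pass to closed sets; the lower bound at exposed points by exponential change of measure plus the weak law; and the extension to non-exposed points by reducing to the affine hull of the support and truncating to force steepness. All of this is correct as a sketch, and in the context of this paper --- where the theorem is simply cited --- no more than a citation is expected.
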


Now consider a general sequence of random vectors $\{\bs Z_n:n\in\nn\}$ in $\rr^d$, and let $\mu_n$ denote again the law of $\bs Z_n$. The CGF associated with the law $\mu_n$ is defined as $\Lambda_n(\bs\alpha)=\log\ee e^{\scal{\bs \alpha}{\bs Z_n}}$. Let $\Lambda(\bs\alpha)=\limsup_{n\toi}\frac{1}{n}\Lambda_n(n\bs\alpha)$ be the limiting CGF. With this notation, the following well-known theorem holds.
\begin{theorem}[G\"artner--Ellis theorem] Assume that
$0\in\intt{\dom \Lambda}$ and $\Lambda$ is an essentially smooth, lower semi-continuous function. Then $\{\mu_n:n\in\nn\}$ satisfies the LDP on $\rr^d$ with good rate function $\Lambda^*$.
\end{theorem}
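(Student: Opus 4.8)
The plan is the classical two-part argument: separately establish the large-deviation upper bound for closed sets and the lower bound for open sets, and then verify that $\Lambda^*$ is a good rate function. For the upper bound I would begin with the exponential Chebyshev inequality: for any $\bs\alpha,\bs x\in\rr^d$,
\[
\mu_n\bigl(\{\bs z:\scal{\bs\alpha}{\bs z}\ge\scal{\bs\alpha}{\bs x}\}\bigr)\le e^{-n\scal{\bs\alpha}{\bs x}}\,\ee\,e^{n\scal{\bs\alpha}{\bs Z_n}}=e^{-n\scal{\bs\alpha}{\bs x}+\Lambda_n(n\bs\alpha)},
\]
whence $\limsup_{n\toi}\frac1n\log\mu_n(\{\cdots\})\le-(\scal{\bs\alpha}{\bs x}-\Lambda(\bs\alpha))$. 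Optimising over $\bs\alpha$ and covering a compact set by finitely many such half-spaces gives the upper bound on compact sets with rate $\Lambda^*$. To upgrade from compact to closed sets I would establish exponential tightness, and this is exactly where $0\in\intt{\dom\Lambda}$ enters: taking $\bs\alpha$ equal to $\pm\rho$ times each coordinate direction, with $\rho$ small enough that these points lie in $\dom\Lambda$, bounds $\mu_n$ of the complement of a large centred box, uniformly at exponential scale.

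For the lower bound it suffices, by a routine reduction, to prove that $\liminf_{n\toi}\frac1n\log\mu_n(B)\ge-\Lambda^*(\bs x)$ for every open ball $B=B(\bs x,\delta)$. The core case is that of an \emph{exposed point} $\bs x$ of $\Lambda^*$ with exposing hyperplane $\bs\eta$; here $\bs\eta\in\intt{\dom\Lambda}$ (using essential smoothness, and that $\Lambda$ is the convex conjugate of $\Lambda^*$), $\Lambda$ is differentiable at $\bs\eta$ with $\nabla\Lambda(\bs\eta)=\bs x$, and $\Lambda^*(\bs x)=\scal{\bs\eta}{\bs x}-\Lambda(\bs\eta)$. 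I would then perform the exponential change of measure
\[
\tilde\mu_n(\mathrm d\bs z)=e^{n\scal{\bs\eta}{\bs z}-\Lambda_n(n\bs\eta)}\,\mu_n(\mathrm d\bs z),
\]
which is a probability measure, verify (again via essential smoothness, to control $\Lambda$ near $\bs\eta$ and to pin down the limiting mean) that $\tilde\mu_n(B(\bs x,\delta))\to1$, and undo the tilt:
\[
\mu_n(B(\bs x,\delta))\ge e^{-n\scal{\bs\eta}{\bs x}-n\delta|\bs\eta|+\Lambda_n(n\bs\eta)}\,\tilde\mu_n(B(\bs x,\delta)).
\]
Taking $\liminf_{n\toi}\frac1n\log$ and then letting $\delta\downarrow0$ yields $\ge-(\scal{\bs\eta}{\bs x}-\Lambda(\bs\eta))=-\Lambda^*(\bs x)$. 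To pass from exposed points to an arbitrary $\bs x\in\dom\Lambda^*$, I would invoke Rockafellar's theorem that, under essential smoothness, the exposed points of $\Lambda^*$ are dense in $\dom\Lambda^*$ in the relevant sense, and then use lower semicontinuity of $\Lambda^*$ to conclude that for every open $G$ the quantity $-\inf_{\bs y\in G}\Lambda^*(\bs y)$ is indeed controlled by the exposed-point estimate.

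Finally, $\Lambda^*$ is convex and lower semicontinuous as a Fenchel--Legendre transform, so it is a rate function; it is \emph{good} because $0\in\intt{\dom\Lambda}$ forces a linear growth bound $\Lambda^*(\bs x)\ge\rho|\bs x|-C$ for some $\rho,C>0$, which makes the level sets $\mathscr L_a\Lambda^*$ bounded and hence compact. I expect the lower bound to be the main obstacle, and within it the two appeals to essential smoothness — first the concentration $\tilde\mu_n(B(\bs x,\delta))\to1$ at an exposed point, which requires controlling the behaviour of $\Lambda$ near the boundary of its domain, and second the density of exposed points in $\dom\Lambda^*$ — are the delicate points where the hypotheses on $\Lambda$ cannot be dispensed with.
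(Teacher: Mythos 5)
The paper does not prove this statement; it quotes the G\"artner--Ellis theorem as classical background (from Dembo and Zeitouni), so there is no in-paper proof to compare against. Your sketch reproduces the standard argument from that reference --- Chernoff bound plus covering and exponential tightness (via $0\in\intt{\dom\Lambda}$) for the upper bound, exponential tilting at exposed points with exposing hyperplanes in $\intt{\dom\Lambda}$ plus the Rockafellar-type density argument (where essential smoothness enters) for the lower bound, and the linear growth bound $\Lambda^*(\bs x)\ge\rho\|\bs x\|-C$ for goodness --- and is correct at the level of a sketch. One small point: the tilting step needs $\frac{1}{n}\Lambda_n(n\bs\eta)$ to actually converge to $\Lambda(\bs\eta)$ (not merely admit it as a $\limsup$, as in the paper's definition of $\Lambda$); this is part of the classical hypothesis and is implicitly assumed in your argument.
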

Note that if $\bs Z_n=\frac{1}{n}\sum_{i=1}^n\bs X_i$ as in the setting of Cram\'er's theorem, then $\Lambda=\Lambda_X$ and further regularity conditions are not required.

One can consider LDPs with the so-called \textit{speed} $\{s_n:n\in\nn\}$, when $1/n$ in \eqref{eq:LDPdef} is replaced by $1/s_n\to0$. Then, the G\"artner--Ellis theorem remains valid if $\Lambda(\bs\alpha)=\limsup_{n\toi}\frac{1}{s_n}\Lambda_n(s_n\bs\alpha)$, the scaled limiting CGF, satisfies the assumptions. All results of this subsection carry through to continuous parameter families $\{\mu_u:u\in\rr_+\}$.

\subsection{Main assumptions}
\label{sec:MA}
In this paper we consider sequences $\bs W=\{\bs W_n:n\in\nn\}$ of random vectors in $\rr^d$, $d\ge 1$, satisfying the following assumptions.

\vb

\noindent\textbf{Restricted LDP Hypothesis.}
\textit{There exists a function $J_{\bs W}:\rr^d_+\to[0,\infty]$ such that, for every $\bs q\ge\0$,
\begin{equation}
\label{eq:RLDP}
\lim_{n\toi}\frac{1}{v_n}\log\prob{\frac{\bs W_n}{a_n}>\bs q}
=
- J_{\bs W}(\bs q).
\end{equation}}
\begin{remark}
\label{rem:RLDP}
If the sequence of probability measures $\{\mu_n:n\in\nn\}$, where $\mu_n$ denotes the law of $\bs W_n/a_n$, satisfies the LDP with speed $v_n$ and rate function $I_{\bs W}$, which is continuous where it is finite, then it also satisfies the RLDP hypothesis with $J_{\bs W}(\bs q)=\inf_{\bs x\ge\bs q} I_{\bs W}(\bs x)$ (hence the name of the hypothesis).
If $I$ is not continuous then it is easy to construct an example in which the restricted LDP does not hold. The opposite implication is also not true in general, that is, the restricted LDP hypothesis does not imply the LDP: property \eqref{eq:RLDP} does not provide any information about the negative values of $\bs W_n/a_n$.
\end{remark}

\vb

\noindent\textbf{Stability and continuity hypothesis.}
\textit{$J_{\bs W}( \bs 0)>0$ and there exists $\bs y>\bs 0$ such that $J_{\bs W}(\bs y)<\infty$. Furthermore, $J_{\bs W}$ is assumed to be continuous on $\intt{\dom{J_{\bs W}}}$.}

\vb

\noindent In the queueing context, $J_{\bs W}(\bs 0)>0$ is the usual stability condition. If $J_{\bs W}(\bs x)=\infty$ for all $\bs x\in\rr^d_+\setminus\{\bs 0\}$, then $\pp(\exists n\in\nn\: :\bs W_n>u\bs q)$ will have superexponential decay.

The restricted LDP hypothesis refers to limiting behavior of $\log\pp(\bs W_n>u a_n\bs q)$ for large $n$, not values for specific $n$. The asymptotic of a single $\bs W_n$ could dominate those of $\pp(\exists n\in\nn\: :\bs W_n>u\bs q)$. The condition below excludes this possibility.

\vb

\noindent\textbf{Uniform individual decay rate hypothesis.}
\textit{For a fixed vector $\bs q>\0$, there exist constants $F=F(\bs q)>V/A$ and $K=K(\bs q)>0$ so that for all $n$ and all $c>K$,
\[
\frac{1}{v_n}\log\prob{\frac{\bs W_n}{a_n}>c \bs q}\le - c^F.
\]
}
\begin{remark} The restricted LDP hypothesis, the stability and continuity hypothesis and the uniform individual decay rate hypothesis were originally introduced in \citep{Duffy03} in the one-dimensional case. That is, if one sets $d=1$, then all the above hypotheses reduce to those of \citep{Duffy03}. The hypotheses presented above can therefore be seen as natural extensions of the hypotheses from \citep{Duffy03} to the multidimensional setting.
\end{remark}
\begin{theorem}[One-dimensional case, Theorem 2.2, \citet{Duffy03}]
\label{thm:Duffy}
If the sequence $W=\{W_n:n\in\nn\}$ of random variables satisfies all the hypotheses of this subsection, then
\begin{equation}
\label{eq:thm:Duffy}
\lim_{u\toi}\frac{1}{h(u)}\log\prob{\exists{n\in\nn}: W_n>u}=-\inf_{c>0} c^{-V/A}J_W(c).
\end{equation}
\end{theorem}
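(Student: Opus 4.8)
Write $\mathscr I:=\inf_{c>0}c^{-V/A}J_W(c)$; the goal is to show the limit in \eqref{eq:thm:Duffy} equals $-\mathscr I$, which I would do by proving the two matching one-sided bounds. For the lower bound — the easy half of the ``principle of the largest term'' — fix $c\in\intt{\dom{J_W}}$ with $c>0$, put $n(u):=\lceil a^{-1}(u/c)\rceil$ so that $u/a_{n(u)}\le c$, and estimate $\prob{\exists n\in\nn:W_n>u}\ge\prob{W_{n(u)}>u}\ge\prob{W_{n(u)}/a_{n(u)}>c}$. Dividing the logarithm by $h(u)$ and peeling off the factor $v_{n(u)}/h(u)$, the Restricted LDP Hypothesis gives $v_{n(u)}^{-1}\log\prob{W_{n(u)}/a_{n(u)}>c}\to-J_W(c)$, while regular variation of $a$ and $v$ (with $v(n+1)/v(n)\to1$ absorbing the rounding and $v(a^{-1}(t))=h(t)$) gives $v_{n(u)}/h(u)\to c^{-V/A}$; hence $\liminf_u h(u)^{-1}\log\prob{\exists n:W_n>u}\ge -c^{-V/A}J_W(c)$. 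Optimizing over $c$ — and noting that $c\downarrow0$ never wins because $J_W(0)>0$ forces $c^{-V/A}J_W(c)\to\infty$, while the boundary of $\dom{J_W}$ contributes nothing by monotonicity and continuity of $J_W$ — yields $\liminf\ge-\mathscr I$.

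For the upper bound I would start from the union bound $\prob{\exists n:W_n>u}\le\sum_{n\ge0}\prob{W_n/a_n>u/a_n}$, fix $\varepsilon>0$, and partition $\nn$ according to the size of $c_n:=u/a_n$, choosing (in this order) a small $\delta>0$, a large constant $M>K$, a small constant $\gamma>0$, and a small $\varepsilon'>0$. Three of the four bands are routine. For tiny indices, say $n\le\lfloor\log u\rfloor$, one has $c_n\to\infty$, so the Uniform individual decay rate hypothesis gives $\prob{W_n>u}\le\exp(-v_n c_n^F)$ with $F>V/A$, and since the dominant factor $u^F$ overwhelms $h\in\RV(V/A)$ this band contributes $-\infty$. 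For the ``sub-moderate'' band $\lfloor\log u\rfloor<n\le\lfloor a^{-1}(u/M)\rfloor$ — only $O(u^{1/A})$ terms, still $c_n\ge M>K$ — the same hypothesis together with Potter's bounds for $h$ yields $\prob{W_n>u}\le\exp(-(1-\delta)M^{F-V/A-\delta}h(u))$, and $F-V/A>0$ lets me choose $M$ so that $(1-\delta)M^{F-V/A-\delta}\ge\mathscr I$ (arbitrarily large if $\mathscr I=\infty$). For large indices $n>\lfloor a^{-1}(u/\gamma)\rfloor$, where $c_n<\gamma$, monotonicity gives $\prob{W_n>u}\le\prob{W_n/a_n>0}\le\exp(-v_n(J_W(0)-\varepsilon'))$ for $u$ (hence all relevant $n$) large, by the Stability and continuity hypothesis; summing this geometric-type tail and using $v_n=h(u/c_n)\ge(1-\delta)\gamma^{-V/A-\delta}h(u)$, taking $\gamma$ small makes this band $\le\exp(-(\mathscr I+o(1))h(u))$ as well.

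The substance is the moderate band $\lfloor a^{-1}(u/M)\rfloor<n\le\lfloor a^{-1}(u/\gamma)\rfloor$, where $c_n\in[\gamma,M]$ and there are again only $O(u^{1/A})$ terms: here the Restricted LDP Hypothesis gives convergence of $v_n^{-1}\log\prob{W_n/a_n>c}$ to $-J_W(c)$ only \emph{pointwise} in $c$, whereas the union bound needs it \emph{uniformly} in $n$. This is the hard part, and it is exactly where the Stability and continuity hypothesis pays off: for fixed $n$ the map $c\mapsto v_n^{-1}\log\prob{W_n/a_n>c}$ is non-increasing, the limit $-J_W$ is continuous on the compact set $[\gamma,M]\subset\intt{\dom{J_W}}$, so a Dini/P\'olya-type argument (monotone functions converging pointwise to a continuous limit converge uniformly on compacta) upgrades the convergence to uniform, giving $\prob{W_n>u}\le\exp(-v_n(J_W(c_n)-\varepsilon'))$ uniformly; then $v_n=h(u/c_n)$ and the uniform-on-compacta convergence $h(u/c_n)/h(u)\to c_n^{-V/A}$ make the exponent at least $h(u)(c_n^{-V/A}J_W(c_n)-\varepsilon'\gamma^{-V/A})\ge h(u)(\mathscr I-\varepsilon'\gamma^{-V/A})$, the polynomial term count being absorbed by $1/h(u)$. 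Summing the four bands and letting $\varepsilon'\downarrow0$ and then $\varepsilon\downarrow0$ gives $\limsup\le-\mathscr I$. The only real nuisance I anticipate is the edge of $\dom{J_W}$: if $\sup\dom{J_W}\le K$ one cannot fit $M$ inside $\intt{\dom{J_W}}$, and the sliver $c_n\in(\sup\dom{J_W},K]$ must be absorbed using monotonicity and the fact that the Restricted LDP limit is $-\infty$ there; if instead $\sup\dom{J_W}>K$ one simply shrinks $M$ below $\sup\dom{J_W}$ and folds the remainder into the sub-moderate band. Neither changes the answer $-\mathscr I$.
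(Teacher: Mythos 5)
The paper itself contains no proof of this statement: it is imported verbatim as Theorem 2.2 of \citet{Duffy03} and used only as a building block for \autoref{thm:LDP} and \autoref{thm:Mcase}, so there is no in-paper argument to compare against. That said, your proposal reconstructs essentially the argument of the cited reference: the lower bound by the principle of the largest term at the epoch $n(u)\approx a^{-1}(u/c)$, and the upper bound by a union bound split into regimes of $c_n=u/a_n$, with the uniform individual decay rate hypothesis controlling $c_n>M$, the stability condition $J_W(0)>0$ controlling $c_n<\gamma$, and --- the genuine crux, which you identify correctly --- the monotonicity of $c\mapsto\prob{W_n/a_n>c}$ upgrading the pointwise restricted LDP to uniform convergence on compact subsets of $\intt{\dom{J_W}}$ (P\'olya's argument), combined with the uniform convergence theorem for $h\in\RV(V/A)$. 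This is sound.

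One detail in your closing remark is off. In the case $\sup\dom{J_W}>K$ you propose to shrink $M$ below $\sup\dom{J_W}$ and fold $\{n:c_n\ge M\}$ into the sub-moderate band; but that band only delivers $\exp\bigl(-(1-\delta)M^{F-V/A-\delta}h(u)\bigr)$, and with $M$ capped below $\sup\dom{J_W}$ there is no reason why $M^{F-V/A}$ should exceed $\mathscr I$ (take $J_W$ constant and very large on its domain, with $\sup\dom{J_W}$ barely above $K$). The correct treatment is the one you sketch for the other case, applied uniformly: keep $M$ large; handle $[\gamma,c_0]\subset\intt{\dom{J_W}}$ by the uniform convergence; handle the sliver $(c_0,c_1]$ straddling $\sup\dom{J_W}$ by monotonicity down to $c_0$, whose cost tends to $\liminf_{c\uparrow\sup\dom{J_W}}c^{-V/A}J_W(c)\ge\mathscr I$ as $c_0,c_1\to\sup\dom{J_W}$; and handle $(c_1,M]$ by monotonicity down to $c_1$, where the restricted LDP limit is $-\infty$, so the decay rate there can be made larger than any prescribed multiple of $h(u)$. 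A second, smaller point: the summability of the large-$n$ tail $\sum_{n>N}\exp(-v_n(J_W(0)-\varepsilon'))$ deserves a line --- it is $v\in\RV(V)$ with $V>0$ (not merely $v_n\to\infty$) that makes the logarithm of this tail asymptotic to $-v_N(J_W(0)-\varepsilon')$. Neither point changes the conclusion.
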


\section{Two extensions}
\label{sec:MR}
In this section we present two generalizations of \autoref{thm:Duffy}. Firstly, we consider families of measures $\mu^W_u(A)=\prob{\exists{n\in\nn}: W_n\in u A}$ for a general set $A$; one easily sees that \autoref{thm:Duffy} considers the case of $A=(1,\infty)$. Secondly, we consider the situation when the sequence $W$ takes values in $\rr^d$, $d\ge 1$, rather than just $\rr$.

\subsection{Extension to the LDP}
\label{sec:extensionLDP}
Define a lower semi-continuous function $\tilde I_W$ by 
\[
\tilde I_W(x)=
\left\{
\begin{array}{cc}
\infty & \text{for}\quad x<0,\\
k_{J_W} x^{V/A} & \text{for}\quad x\ge0,
\end{array}
\right.
\]
where $k_{J_W}=\inf_{c>0} c^{-V/A}J_W(c)$ is the constant appearing on the right-hand side of \eqref{eq:thm:Duffy}. In this subsection we assume that the sequence of random variables $W=\{W_n:n\in\nn\}$ is such that $W_0=0$. This assures that $Q=\sup_{n\ge 0} W_n$ is a non-negative random variable. Let $\mu^W_u$ be a probability measures on $\rr$ defined as
$
\mu^W_u(A)=\prob{\exists{n\in\nn}: W_n\in u A}.
$
The following theorem is the main result of this subsection.
\begin{theorem}
\label{thm:LDP}
Under the assumptions of \autoref{thm:Duffy}, the family $\{\mu^W_u:u\in\rr_+\}$ satisfies the LDP with speed function $h$ and good rate function $\tilde I_W$.
\end{theorem}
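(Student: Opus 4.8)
The plan is to deduce the LDP from the single-level logarithmic asymptotics of \autoref{thm:Duffy}, the regular variation of $h$, and the standard reduction of LDP bounds on $\rr$ to half-lines (upper bound) and bounded open intervals (lower bound). Throughout, $\mu^W_u$ is the law of $Q/u$ with $Q=\sup_{n\ge0}W_n$; since $W_0=0$ we have $Q\ge0$, so $\mu^W_u$ is carried by $\rr_+$ and $\mu^W_u((x,\infty))=\prob{\exists n\in\nn:W_n>ux}$ for $x\ge0$. Hence, for closed $F$ or open $G$ disjoint from $\rr_+$, both sides of the required inequalities equal $-\infty$, and we may assume $F\cap\rr_+\ne\emptyset$, $G\cap\rr_+\ne\emptyset$. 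I also record that the hypotheses force $k_{J_W}>0$: since $q\mapsto\prob{W_n/a_n>q}$ is non-increasing, $J_W$ is non-decreasing, so $J_W(q)\ge J_W(0)>0$ for every $q$; combining this on $(0,K]$ with $J_W(q)\ge q^F$ for $q>K$ (uniform individual decay, $F>V/A$) gives $c^{-V/A}J_W(c)\ge\min\{K^{-V/A}J_W(0),K^{F-V/A}\}>0$ for all $c>0$. Consequently $\tilde I_W$ is strictly increasing and continuous on $\rr_+$, lower semicontinuous on $\rr$, and its level sets $\{x:\tilde I_W(x)\le a\}=[0,(a/k_{J_W})^{A/V}]$ are compact, so $\tilde I_W$ is a good rate function.

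The key input is that, for every $x>0$,
\[
\lim_{u\toi}\frac1{h(u)}\log\prob{\exists n\in\nn:W_n>ux}=-k_{J_W}x^{V/A}=-\tilde I_W(x),
\]
which follows from \autoref{thm:Duffy} applied to the threshold $ux$ together with $h(ux)/h(u)\to x^{V/A}$ (as $h\in\RV(V/A)$); sandwiching $\prob{Q>ux}\le\prob{Q\ge ux}\le\prob{Q>u(x-\delta)}$, letting $\delta\downarrow0$, and using continuity of $\tilde I_W$ shows the same limit holds with $\prob{Q\ge ux}$. For the upper bound, fix a closed $F$ with $F\cap\rr_+\ne\emptyset$ and put $m=\min(F\cap\rr_+)\in F$. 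Since $\tilde I_W$ is increasing on $\rr_+$ and $+\infty$ on the negatives, $\inf_F\tilde I_W=\tilde I_W(m)$; and since $\mu^W_u$ lives on $\rr_+$ with $F\cap\rr_+\subseteq[m,\infty)$, we get $\mu^W_u(F)\le\mu^W_u([m,\infty))=\prob{Q\ge um}$. Therefore $\limsup_{u\toi}h(u)^{-1}\log\mu^W_u(F)\le-\tilde I_W(m)=-\inf_F\tilde I_W$.

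For the lower bound it suffices, since bounded open intervals form a base for the topology of $\rr$ and $\tilde I_W$ is continuous on $\rr_+$, to prove that for every $x\ge0$ and every $\epsilon\in(0,x)$ (and, for $x=0$, every $\epsilon>0$),
\[
\liminf_{u\toi}\frac1{h(u)}\log\mu^W_u((x-\epsilon,x+\epsilon))\ge-\tilde I_W(x-\epsilon).
\]
For $x=0$ this is immediate, since $W_0=0\in u(-\epsilon,\epsilon)$ forces $\mu^W_u((-\epsilon,\epsilon))=1$ and $\tilde I_W(0)=0$. For $x>0$, write $A_u=\{Q>u(x-\epsilon)\}=\{\exists n:W_n>u(x-\epsilon)\}$ and $B_u=\{Q\ge u(x+\epsilon)\}\subseteq A_u$; then $\mu^W_u((x-\epsilon,x+\epsilon))=\prob{A_u}-\prob{B_u}$. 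By the displayed asymptotics $h(u)^{-1}\log\prob{A_u}\to-\tilde I_W(x-\epsilon)$, while $\limsup_{u\toi}h(u)^{-1}\log\prob{B_u}\le-\tilde I_W(x+\epsilon)<-\tilde I_W(x-\epsilon)$ by strict monotonicity of $\tilde I_W$ on $\rr_+$; hence $\prob{B_u}/\prob{A_u}\to0$, so $\prob{A_u}-\prob{B_u}\ge\tfrac12\prob{A_u}$ for all large $u$, and the claimed bound follows.

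The only genuinely delicate point is the interval estimate in the lower bound. One has no control on the increments of $W$, so the probability of hitting the band $u(x-\epsilon,x+\epsilon)$ cannot be bounded below by a single-time probability; instead one uses the exact decomposition $\{u(x-\epsilon)<Q<u(x+\epsilon)\}=A_u\setminus B_u$ and the fact that, precisely because $k_{J_W}>0$, reaching level $u(x+\epsilon)$ is exponentially more costly than reaching $u(x-\epsilon)$, so the overshoot event $B_u$ is negligible on the scale $h(u)$. Everything else—transferring \autoref{thm:Duffy} between levels via regular variation, reducing the Borel LDP bounds to half-lines and bounded intervals, and the observation that $W_0=0$ confines $\mu^W_u$ to $\rr_+$—is routine.
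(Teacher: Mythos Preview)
Your proof is correct and follows essentially the same route as the paper's: both deduce the half-line asymptotics from \autoref{thm:Duffy} via regular variation of $h$, obtain the upper bound by monotonicity of $\tilde I_W$ (reducing to a half-line starting at the left endpoint of $\bar\Gamma\cap\rr_+$), and obtain the lower bound by writing the mass of a small interval as the difference of two tail probabilities and showing the overshoot term is negligible. Your version is in fact slightly more complete than the paper's, since you explicitly verify that $k_{J_W}>0$ (needed for the strict monotonicity that makes the overshoot negligible) and that $\tilde I_W$ is a \emph{good} rate function, and you make clear that $\mu^W_u$ is the law of $Q/u$, which is what is actually required for the subtraction step $\mu^W_u((x-\epsilon,x+\epsilon))=\prob{A_u}-\prob{B_u}$ to hold.
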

\begin{proof}
First notice that, for any $k>0$,
\begin{equation}
\label{eq:intervals}
\lim_{u\toi}\frac{1}{h(u)}\log\mu^W_u((k,\infty))=-\tilde I_W(k).
\end{equation}
Indeed, 
\[
\frac{1}{h(u)}\log\mu^W_u((k,\infty))=
\frac{h(ku)}{h(u)}\frac{1}{h(ku)}\log\mu^W_{ku}((1,\infty)),
\]
so that \eqref{eq:intervals} follows from \autoref{thm:Duffy} combined with the fact that $h(ku)/h(u)$ tends to $k^{V/A}$.

Let $\Gamma$ be any set in $\mathcal B(\rr)$.
Now if for some $y$,  $\inf_{x\in \bar\Gamma}\tilde I_W(x)=\tilde I_W(y)$, then, for any $\eta\in(0,y)$, the monotonicity of $\tilde I_W$ on $\rr_+$ implies that
\[
\mu^W_u(\Gamma)\le\mu^W_u([y,\infty))\le\mu^W_u((y-\eta,\infty)).
\]
Hence, by \eqref{eq:intervals},
\[
\limsup_{u\toi}\frac{1}{h(u)}\log\mu^W_u(\Gamma)\le -\tilde I_W(y-\eta).
\]
This combined with the continuity of $\tilde I_W$ on $\rr_+$ implies the upper bound:
\[
\limsup_{u\toi}\frac{1}{h(u)}\log\prob{\exists{n\in\nn}: W_n\in u \Gamma}\le -\inf_{x\in \bar \Gamma}\tilde I_W(x).
\]
If $\inf_{x\in \bar \Gamma} \tilde I_W(x)=0$ or $\inf_{x\in \bar \Gamma}\tilde I_W(x)=\infty$, then the above bound holds trivially.

Now if $\Gamma^\circ\cap\rr_+=\emptyset$, then the lower bound
\[
\liminf_{u\toi}\frac{1}{h(u)}\log\prob{\exists{n\in\nn}: W_n\in u \Gamma}\ge -\inf_{x\in \Gamma^\circ}\tilde I_W(x)
\]
holds immediately. Therefore, let $\Gamma^\circ\subset\rr_+$ and $s\in \Gamma^\circ,\eta>0$ be such that $(s-\eta,s+\eta]\subset \Gamma^\circ$. Obviously, $\mu^W_u(\Gamma)\ge \mu^W_u((s-\eta,s+\eta])=
\mu^W_u((s-\eta,\infty))-\mu^W_u((s+\eta,\infty))$. From \eqref{eq:intervals}, for sufficiently large $u$, $\mu^W_u((s+\eta,\infty))\le\mu^W_u((s-\eta,\infty))/2$. Hence,
\[
\liminf_{u\toi}\frac{1}{h(u)}\log\mu^W_u(\Gamma)\ge 
\liminf_{u\toi}\frac{1}{h(u)}\log\left(\frac{\mu^W_u((s-\eta,\infty))}{2}\right)
=-\tilde I_W(s-\eta)\ge -\tilde I_W(s).
\]
Now the lower bound follows after optimization over $s\in \Gamma^\circ$. 
\end{proof}
\begin{remark}
The assumption that $W_0=0$ is natural and fulfilled in many applications. If it does not hold, however, then the LDP from \autoref{thm:LDP} remains true on $\mathcal B(\rr_+)$. In both of these cases, the continuity of $\tilde I_W$ on $\rr_+$ implies that for any set $A\in\mathcal B(\rr_+)$,
\[
\lim_{u\toi}\frac{1}{h(u)}\log\prob{\exists{n\in\nn}: W_n\in u A}
=-\inf_{x\in A} \tilde I_W(x).
\]
Hence, \autoref{thm:LDP} generalizes \autoref{thm:Duffy}, which can be seen by taking $A=(1,\infty)$.
\end{remark}
%

\subsection{Extension to the multidimensional case}
\label{sec:MLDP}
In this subsection we generalize \autoref{thm:Duffy} to the multidimensional case. \autoref{thm:Duffy} itself serves as a building block in the proof of the following theorem.
\begin{theorem}[Multidimensional case]
\label{thm:Mcase}
If the sequence $\bs W=\{\bs W_n:n\in\nn\}$ of random vectors in $\rr^d$, $d\ge 1$, satisfies the hypotheses of \autoref{sec:MA}, then, for any $\bs q>\bs 0$,
\[
\lim_{u\toi}\frac{1}{h(u)}\log\prob{\exists{n\in\nn}: \bs W_n>u\bs q}=-\inf_{c>0} c^{-V/A}J_{\bs W}(c\bs q).
\]
\end{theorem}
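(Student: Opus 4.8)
The plan is to collapse the $d$-dimensional problem onto a single scalar sequence, chosen so that the one-dimensional \autoref{thm:Duffy} applies directly. Fix $\bs q>\bs 0$, write $W_n^{(i)}$ for the $i$-th coordinate of $\bs W_n$, and set
\[
\tilde W_n:=\min_{1\le i\le d}\frac{W_n^{(i)}}{q_i},\qquad n\in\nn
\]
(we use $q_i>0$ for each $i$ in order to divide by $q_i$). The decisive feature of this choice is the exact coincidence of events
\[
\{\bs W_n>u\bs q\}=\{\tilde W_n>u\}
\qquad\text{and}\qquad
\{\bs W_n/a_n>c\bs q\}=\{\tilde W_n/a_n>c\},
\]
valid for all $n$, all $u\ge 0$, and all $c\ge 0$. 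In particular $\prob{\exists{n\in\nn}:\bs W_n>u\bs q}=\prob{\exists{n\in\nn}:\tilde W_n>u}$, so it suffices to verify that the real-valued sequence $\tilde W=\{\tilde W_n:n\in\nn\}$ satisfies the three hypotheses of \autoref{sec:MA} in dimension one, and then to invoke \autoref{thm:Duffy}.

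Two of those hypotheses transfer at once. Applying the multidimensional restricted LDP hypothesis to the vector $c\bs q\ge\bs 0$ and using the second event identity above, we get $\frac1{v_n}\log\prob{\tilde W_n/a_n>c}=\frac1{v_n}\log\prob{\bs W_n/a_n>c\bs q}\to-J_{\bs W}(c\bs q)$ as $n\toi$, for every $c\ge 0$; hence $\tilde W$ satisfies the one-dimensional restricted LDP hypothesis with $J_{\tilde W}(c):=J_{\bs W}(c\bs q)$. Similarly, the multidimensional uniform individual decay hypothesis \emph{for the fixed vector $\bs q$} states precisely that $\frac1{v_n}\log\prob{\tilde W_n/a_n>c}\le-c^{F(\bs q)}$ for all $n$ and all $c>K(\bs q)$, with $F(\bs q)>V/A$, which is the one-dimensional uniform individual decay hypothesis for $\tilde W$.

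The stability and continuity hypothesis for $\tilde W$ is the step I expect to demand the most care. First, $J_{\bs W}$ is nondecreasing: if $\bs p\ge\bs p'$ then $\{\bs W_n/a_n>\bs p\}\subseteq\{\bs W_n/a_n>\bs p'\}$, and letting $n\toi$ gives $J_{\bs W}(\bs p)\ge J_{\bs W}(\bs p')$. Thus $J_{\tilde W}(0)=J_{\bs W}(\bs 0)>0$; and if $\bs y>\bs 0$ satisfies $J_{\bs W}(\bs y)<\infty$ (stability hypothesis), then $c_1:=\min_i(y_i/q_i)>0$ obeys $c_1\bs q\le\bs y$, so $J_{\tilde W}(c_1)=J_{\bs W}(c_1\bs q)\le J_{\bs W}(\bs y)<\infty$. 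Consequently $\dom{J_{\tilde W}}$ is an interval with left endpoint $0$ (monotonicity of $c\mapsto J_{\bs W}(c\bs q)$) and right endpoint $c^*\ge c_1>0$, so it has nonempty interior. For continuity on $\intt{\dom{J_{\tilde W}}}=(0,c^*)$, fix $c\in(0,c^*)$ and choose $c'\in(c,c^*)$; then $c'\bs q\in\dom{J_{\bs W}}$, so by monotonicity the whole set $\{\bs x:\bs 0\le\bs x\le c'\bs q\}$ lies in $\dom{J_{\bs W}}$, and since $0<cq_i<c'q_i$ for each $i$ the point $c\bs q$ lies in the interior of this set; hence $c\bs q\in\intt{\dom{J_{\bs W}}}$. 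Continuity of $J_{\bs W}$ at $c\bs q$, composed with the continuous map $c\mapsto c\bs q$, yields continuity of $J_{\tilde W}$ at $c$. All three hypotheses being verified, \autoref{thm:Duffy} applied to $\tilde W$ gives $\lim_{u\toi}h(u)^{-1}\log\prob{\exists{n\in\nn}:\tilde W_n>u}=-\inf_{c>0}c^{-V/A}J_{\tilde W}(c)$; recalling $J_{\tilde W}(c)=J_{\bs W}(c\bs q)$ and the event identity of the first paragraph, this is exactly the claimed formula.

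Where is the genuinely multidimensional content? First, one must find the right scalar reduction: among projection-type functionals of $\bs W_n$, the coordinate-wise minimum $\min_i W_n^{(i)}/q_i$ is essentially the only choice for which $\{\bs W_n>u\bs q\}$ and $\{\tilde W_n>u\}$ coincide \emph{exactly}, so that no sub-/super-additivity estimates or approximation arguments are needed. Second --- the delicate point flagged above --- transferring the continuity hypothesis: $J_{\bs W}$ is assumed continuous only on $\intt{\dom{J_{\bs W}}}$, and a priori nothing forbids the ray $\{c\bs q:c>0\}$ from meeting $\dom{J_{\bs W}}$ only along its topological boundary; ruling this out is precisely where the monotonicity (equivalently, downward-closedness) of $\dom{J_{\bs W}}$, the strict positivity of $\bs q$, and the multidimensional stability hypothesis must all be used together.
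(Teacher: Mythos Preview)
Your proof is correct and follows the same route as the paper: define the scalar sequence $\tilde W_n=\min_i W_n^{(i)}/q_i$, use the event identity $\{\bs W_n>u\bs q\}=\{\tilde W_n>u\}$, verify the one-dimensional hypotheses for $\tilde W$ with $J_{\tilde W}(c)=J_{\bs W}(c\bs q)$, and invoke \autoref{thm:Duffy}. The paper disposes of the stability and continuity check in one line (``easily follows''), whereas you spell out the monotonicity/downward-closedness argument showing $c\bs q\in\intt{\dom{J_{\bs W}}}$ for $c\in(0,c^*)$; this extra care is warranted and your argument for it is sound.
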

\begin{proof}
Notice that
$\prob{\bs W_n>u\bs q}=\prob{Z_n>u}$,
where $Z=\{Z_n:n\in\nn\}$ is a sequence of random variables such that $Z_n=\min_{i=1,\ldots,d}(W_{n,i}/q_i)$. The sequence $Z$ satisfies the assumptions of \autoref{thm:Duffy} with a function $J_Z$ given by $J_Z(c)=J_{\bs W}(c\bs q)$. Indeed, 
by the restricted LDP hypothesis, it follows that for every $c\ge 0$,
\[
\lim_{n\toi}\frac{1}{v_n}\log\prob{\frac{Z_n}{a_n}>c}= 
\lim_{n\toi}\frac{1}{v_n}\log\prob{\frac{\bs W_n}{a_n}>c\bs q}=
- J_{\bs W}(c\bs q).
\]
The stability and continuity hypothesis for $J_Z$ easily follows from the stability and continuity hypothesis for $J_{\bs W}$. The same applies to the uniform individual decay rate hypothesis. Therefore, \autoref{thm:Duffy} yields
\begin{align*}
\lim_{u\toi}\frac{1}{h(u)}\log\prob{\exists{n\in\nn}: \bs W_n>u\bs q}
&=
\lim_{u\toi}\frac{1}{h(u)}\log\prob{\exists{n\in\nn}: Z_n>u}\\
&=
- \inf_{c>0} c^{-V/A} J_Z(c)\\
&=
-\inf_{c>0} c^{-V/A} J_{\bs W}(c\bs q).
\end{align*}
This completes the proof.
\end{proof}

\section{Connections with existing literature} 
\label{sec:literature}
We have already discussed the relations of our results to \citep{Duffy03}.
In this section we shall discuss our findings in the light of already existing results from \citep{Glynn94,Duffield95,Collamore96}.

\subsection{The cumulant generating function approach}
\label{sec:sCGF}
The analyses in \citep{Glynn94,Duffield95} are based on the CGF. Both of them consider the case of $d=1$, but only \citep{Duffield95} allows for non-linear scaling. In this subsection we present conditions under which the main assumptions of the present paper are fulfilled. Recall that,
\[
\Lambda_n(\bs\alpha)=\log\ee\exp\left(\scal{\bs\alpha}{\frac{\bs W_n}{a_n}}\right)
\]
is the CGF of the law of $\bs W_n/a_n$. Here it is assumed that $\Lambda_n$ exists as a finite real number for all $\bs\alpha\in\rr^d$ and all $n\in\nn$. The hypotheses of our paper have simple expressions in terms of the CGF. The conditions we specify here for the CGF case, based on their one-dimensional analogues in \citep{Duffy03}, are intended for easy applicability, rather than maximum generality. Under these assumptions, the large deviation rate function is convex. The CGF technique is not applicable to models which have nonconvex rate functions.

\vb

\noindent\textbf{LDP hypothesis, CGF case.}
\textit{For each $\bs\alpha\in\rr^d$, the scaled limiting cumulant generating function $\Lambda(\bs\alpha)=\lim_{n\toi}\frac{1}{v_n}\Lambda_n(v_n\bs\alpha)$ exists. Furthermore, $\Lambda$ is assumed to be continuously differentiable.}

\vb

\noindent Under the above assumption, by the G\"artner--Ellis theorem, the sequence of probability measures $\{\mu_n:n\in\nn\}$, where $\mu_n$ is the law of $\bs W_n/a_n$, satisfies the LDP with rate function $I_{\bs W}=\Lambda^*$ and speed $v_n$. This implies that $I_{\bs W}$ is convex and continuous on the set where it is finite and therefore the RLDP holds with $J_{\bs W}(\bs q)=\inf_{\bs x\ge\bs q}\Lambda^*(\bs x)$, which is also continuous on $\intt{\dom{J_{\bs W}}}$. Hence, in order to assure that the stability and continuity hypothesis holds, one can require the following.

\vb

\noindent\textbf{Stability hypothesis, CGF case.}
\textit{There exists $\bs\alpha^\star>\0$ such that $\Lambda(\bs\alpha^\star)<0$.}

\vb

\noindent The above hypothesis implies that $J_{\bs W}(\0)\ge - \Lambda(\bs\alpha^\star)>0$. Indeed,
\[
J_{\bs W}(\0)=\inf_{\bs x\ge \0}\sup_{\bs \alpha\in\rr^d}\left(\scal{\bs \alpha}{\bs x}-\Lambda(\bs\alpha)\right)
\ge 
\sup_{\bs\alpha\in\rr^d_+}\inf_{\bs x\ge \0}\left(\scal{\bs\alpha}{\bs x}-\Lambda(\bs\alpha)\right)
=
-\inf_{\bs\alpha\in\rr^d_+}\Lambda(\bs\alpha)\ge -\Lambda(\bs \alpha^\star).
\]

\vb

\noindent\textbf{Uniform individual decay rate hypothesis, CGF case.}
\textit{There exist constants $F'$ and $M$ such that $F'>\max\{V/A,1\}$ and
$
\frac{1}{v_n}\Lambda_n(v_n\bs\alpha)\le M \|\bs \alpha\|^{F'/(F'-1)}
$
for all $\bs\alpha>\0$ and all $n\in\nn$.}

\vb

\noindent Under this hypothesis, for each $F\in(1,F')$, there exists a constant $K_F=K_F(\bs q)$ such that, for all $c>K_F$, and all $n\in\nn$,
\[
\frac{1}{v_n}\log\prob{\frac{\bs W_n}{a_n}>c\bs q}\le -c^F.
\]
That is the uniform individual decay rate hypothesis certainly holds.
Indeed, an elementary consequence of Chernoff's inequality is
\[
\log\prob{\bs W_n>ca_n\bs q}\le -v_n\left(c\scal{\bs\alpha}{\bs q}-\frac{1}{v_n}\Lambda_n(v_n\bs\alpha)\right),
\]
for any $\bs\alpha>\0$. It then follows that
\[
\log\prob{\bs W_n>ca_n\bs q}\le -v_n\left(c\scal{\bs\alpha}{\bs q}-M \|\alpha\|^{F'/(F'-1)}\right).
\]
Choosing $\bs \alpha=(c(F'-1)\|\bs q\|/(MF'))^{F'-1}\bs q$, we have
\begin{equation}
\label{eq:dummy34}
\log\prob{\bs W_n>ca_n\bs q}\le -v_n (c\|q\|^2)^{F'} 
\left(
M^{1-F'}{F'}^{-F'}(F'-1)^{F'-1}
\right).
\end{equation}
Since $M$ and $F'$ are constants, for each $F\in(\max\{V/A,1\},F')$, there exists $K_F=K_F(\bs q)$ such that, for all $c>K_F$, the right hand side of \eqref{eq:dummy34} will be less than $-v_n c^F$.

\subsection{The LDP with a convex rate function}
\label{sec:Collamore}
The purpose of this subsection is to discuss the differences between the approach from \citep{Collamore96} and the one in our paper.

\citet{Collamore96} considered a sequence $\bs Y=\{\bs Y_n:n\in\nn\}$ of random vectors in $\rr^d$. The main assumption of his paper is that the sequence of probability measures $\{\mu_n:n\in\nn\}$, where $\mu_n$ is the law of $\bs Y_n/n$, satisfies the LDP with a convex rate function $I_{\bs Y}$, such that $\mathscr L_0 I_{\bs Y}\ne\emptyset$, that is $I_{\bs Y}(\bs x)=0$ for some $\bs x\in\rr^d$. Furthermore, it is assumed that (using the notation from \citep{Collamore96}):
\begin{itemize}
\item[\textbf{H1}':] $\sup_{n\ge N}\frac{1}{n}\log\ee\exp\scal{\bs\alpha} {\bs Y_n}<\infty$ for all $\bs\alpha\in\mathscr L_0 I^*_{\bs Y}$ and $N$ greater than or equal to some $N_0$.
\end{itemize}
\begin{remark}
\label{rem:condH1}
\textbf{H1}' is a regularity condition on the sequence $\bs Y$. It is satisfied if $\bs Y$ is, for example, the $n$th partial sum of an i.i.d.~sequence of random vectors satisfying the condition given in Cram\'er's theorem, or the weaker condition given in \citep{Ney95}. It also holds when $\bs Y$ is a Markov-additive process satisfying the uniform recurrence condition (6.2) in \citep{Ney87}. In both these cases $N_0$ can be taken to be $1$. $\bs Y$ can also be a general sequence satisfying the conditions of the G\"artner--Ellis theorem and (i) $\Lambda(\bs\alpha)$ is finite in the neighborhood of each $\bs\alpha\in\mathscr L_0 \Lambda$; (ii) the level sets of $\Lambda$ are compact; recall that
$\Lambda(\bs\alpha)=\limsup_{n\toi}\frac{1}{n}\log\ee\exp\scal{\bs\alpha} {\bs Y_n}$.
\end{remark}
\begin{theorem}[\citet{Collamore96}, Theorem 2.2]
\label{thm:collamore}
Suppose $A$ is a general set in $\rr^d$ and \textbf{H1}' and 
\begin{itemize}
\item[\textbf{H2}:] for some $\delta>0$, $A\cap\cone(\mathscr C_\delta)=\emptyset$, where
$
\mathscr C_\delta=\{\bs x:\inf_{\bs y\in\mathscr L_0 I_{\bs Y}}\|\bs x-\bs y\|<\delta\};
$
\end{itemize}
are satisfied. Then, for any $N\ge N_0$,
\[
\liminf_{u\toi}\frac{1}{u}\log\prob{\exists{n\ge N}:\bs Y_n\in u A}\ge -\inf_{\bs x\in A^\circ} \tilde I_{\bs Y}(\bs x)
\]
and
\[
\limsup_{u\toi}\frac{1}{u}\log\prob{\exists{n\ge N}:\bs Y_n\in u A}\le -\inf_{\bs x\in \bar A} \tilde I_{\bs Y}(\bs x),
\]
where $\tilde I_{\bs Y}(\bs x)=\sup_{\bs \alpha\in\mathscr L_0 I^*_{\bs Y}}\scal{\bs \alpha}{\bs x}$ is the support function of $\mathscr L_0 I^*_{\bs Y}$.
\end{theorem}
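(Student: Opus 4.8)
The plan is to deduce \autoref{thm:collamore} entirely from the assumed LDP for $\{\bs Y_n/n:n\in\nn\}$ (speed $n$, convex rate function $I_{\bs Y}$ with $\mathscr L_0 I_{\bs Y}\ne\emptyset$ and $I^*_{\bs Y}$ equal to the limiting CGF $\Lambda$), together with one convex-analytic identity and the ``principle of the largest term'' applied to the union over $n$. The first thing I would establish is the identity
\[
\tilde I_{\bs Y}(\bs x)=\sup_{\bs\alpha\in\mathscr L_0 I^*_{\bs Y}}\scal{\bs\alpha}{\bs x}=\inf_{t>0}t\,I_{\bs Y}(\bs x/t),\qquad\bs x\in\rr^d .
\]
The inequality $t\,I_{\bs Y}(\bs x/t)\ge\scal{\bs\alpha}{\bs x}-t\,I^*_{\bs Y}(\bs\alpha)\ge\scal{\bs\alpha}{\bs x}$, valid whenever $I^*_{\bs Y}(\bs\alpha)\le0$, gives ``$\ge$'' immediately; for ``$\le$'' one computes the Fenchel conjugate of $\bs x\mapsto\inf_{t>0}t\,I_{\bs Y}(\bs x/t)$ (the infimal projection in $t$ of the perspective function of $I_{\bs Y}$) and checks that it is the $\{0,\infty\}$-valued indicator of the closed convex set $\mathscr L_0 I^*_{\bs Y}$, so that its biconjugate is exactly the support function of that set. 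Any discrepancy between $\inf_{t>0}t\,I_{\bs Y}(\cdot/t)$ and its lower semi-continuous hull $\tilde I_{\bs Y}$ will be harmless below: infima over open sets are unchanged by passing to the l.s.c.\ hull, and in the upper bound only the elementary inequality $t\,I_{\bs Y}(\bs x/t)\ge\tilde I_{\bs Y}(\bs x)$ is used.

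For the lower bound, I would fix $\bs x\in A^\circ$ and $\varepsilon>0$, pick $t_0>0$ with $t_0\,I_{\bs Y}(\bs x/t_0)<\tilde I_{\bs Y}(\bs x)+\varepsilon$, and set $n_u=\lfloor t_0u\rfloor$, so $n_u\ge N$ for large $u$ and $u/n_u\to t_0^{-1}$. Since $\bs x\in A^\circ$, a fixed open ball $G\ni\bs x/t_0$ satisfies $G\subseteq(u/n_u)A$ for all large $u$, hence $\prob{\exists n\ge N:\bs Y_n\in uA}\ge\prob{\bs Y_{n_u}/n_u\in G}$, and the LDP lower bound gives $\liminf_{u\toi}\tfrac1{n_u}\log\prob{\bs Y_{n_u}/n_u\in G}\ge-\inf_G I_{\bs Y}\ge-I_{\bs Y}(\bs x/t_0)$. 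Multiplying by $n_u/u\to t_0$, then letting $\varepsilon\downarrow0$ and taking the supremum over $\bs x\in A^\circ$, yields the claimed lower bound; neither \textbf{H2} nor \textbf{H1}' enters here, and the value of $N\ge N_0$ is irrelevant because $n_u\toi$.

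For the upper bound I would replace $A$ by $\bar A$; if $\bs 0\in\bar A$, or more generally $\inf_{\bar A}\tilde I_{\bs Y}=0$, the bound is trivial, so assume otherwise, whence $\bs 0\notin\bar A$. Here \textbf{H2} does two jobs. First, after shrinking $\delta$ so that $\bs 0\notin\mathscr C_\delta$, the disjointness $\bar A\cap\cone(\mathscr C_\delta)=\emptyset$ forces every ray through a point of $\bar A$ to stay at distance $\ge\delta$ from $\mathscr L_0 I_{\bs Y}$; combined with $\{I_{\bs Y}\le\varepsilon_\delta\}\subseteq\mathscr C_\delta$ for some $\varepsilon_\delta>0$ (available since, as in \autoref{rem:condH1}, the regularity underlying \textbf{H1}' makes $I_{\bs Y}$ good), this gives $I_{\bs Y}\ge\varepsilon_\delta$ on $s\bar A$ for \emph{every} $s>0$, so in particular $\inf_{\bar A}\tilde I_{\bs Y}>0$. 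I would then split $\prob{\exists n\ge N:\bs Y_n\in u\bar A}$ into the ranges $N\le n\le Tu$ and $n>Tu$. In the tail range, writing $\bs Y_n\in u\bar A$ as $\bs Y_n/n\in(u/n)\bar A$ and using $\inf_{(u/n)\bar A}I_{\bs Y}\ge\varepsilon_\delta$ together with a uniform-in-$n$ Gärtner--Ellis upper estimate, the tail is $\le e^{-Tu\varepsilon_\delta/2}$ for large $u$, which for $T$ large enough lies below $e^{-u\inf_{\bar A}\tilde I_{\bs Y}}$. In the moderate range the number of terms is only $\sim Tu$, subexponential at speed $u$, so it suffices to estimate $\max_{N\le n\le Tu}\prob{\bs Y_n/n\in(u/n)\bar A}$; the Gärtner--Ellis/Cramér upper estimate, made uniform in $n$ by \textbf{H1}' (exponential tightness to truncate to a ball, a finite cover of the compact truncation by half-spaces, and control of the per-$n$ logarithmic moment generating functions $\tfrac1n\log\ee e^{\scal{\bs\alpha}{\bs Y_n}}$ via \textbf{H1}'), bounds this maximum by $\exp(-u\inf_{\bs x\in\bar A}\inf_{0<t\le T}t\,I_{\bs Y}(\bs x/t)+o(u))\le\exp(-u\inf_{\bar A}\tilde I_{\bs Y}+o(u))$, the last step by the Step-1 inequality $t\,I_{\bs Y}(\bs x/t)\ge\tilde I_{\bs Y}(\bs x)$. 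Combining the two ranges gives $\limsup_{u\toi}\tfrac1u\log\prob{\exists n\ge N:\bs Y_n\in u\bar A}\le-\inf_{\bar A}\tilde I_{\bs Y}$; note that, in contrast to a naive single-tilt Chernoff bound (which would only yield a minimax quantity $\sup_{\bs\alpha}\inf_{\bs x}$), this argument optimises $\inf_{\bs x}\inf_t$ \emph{pointwise} and so requires no convexity of $A$.

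The convex-analytic identity of the first step and the lower bound are routine; the hard part will be the uniformity needed for the upper bound. The Gärtner--Ellis/Cramér upper estimate must hold uniformly in $n$ over the range $[N,Tu]$ and over the entire family of dilated sets $\{(u/n)\bar A\}$, so that the $\sim Tu$-fold union is negligible — this is exactly what \textbf{H1}' is designed to guarantee (it makes the logarithmic moment generating functions bounded uniformly in $n$ on $\mathscr L_0 I^*_{\bs Y}$, and the optimal tilts, occurring at the extremal time, satisfy $\Lambda(\bs\alpha)=0$ and hence lie in $\mathscr L_0 I^*_{\bs Y}$). Hypothesis \textbf{H2} plays the complementary role: it keeps $\inf_{\bar A}\tilde I_{\bs Y}$ strictly positive and, via the ray argument, furnishes the uniform lower bound $\varepsilon_\delta$ on $I_{\bs Y}$ that suppresses the $n>Tu$ tail. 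Carrying out the bookkeeping of these two uniform estimates — in particular isolating the finitely many small $n$ where only the $\limsup$ defining $\Lambda$ is a priori available — is where the technical weight of the proof lies.
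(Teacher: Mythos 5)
First, a point of orientation: the paper offers no proof of \autoref{thm:collamore} at all. It is quoted verbatim from \citet{Collamore96} (his Theorem 2.2) and is used in \autoref{sec:Collamore} only as a point of comparison with \autoref{thm:Mcase}; the only ingredient of your argument that the paper itself touches is your Step-1 identity, which it records by citing \citep[Theorem 13.5]{Rockafellar70} to the effect that $\tilde I_{\bs Y}$ is the lower semi-continuous closure of $\bs x\mapsto\inf_{\tau>0}\tau^{-1}I_{\bs Y}(\tau\bs x)$. So there is no in-paper proof to compare yours against, and your attempt has to stand on its own. Your Step 1 and your lower bound do stand: the duality identity, the choice $n_u=\lfloor t_0u\rfloor$, the inclusion of a fixed ball into $(u/n_u)A$, and the observation that passing to the l.s.c.\ hull does not change infima over open sets are all correct and complete, and you are right that neither \textbf{H1}' nor \textbf{H2} is needed there.

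The upper bound, however, contains a genuine gap rather than mere bookkeeping. The whole difficulty of the theorem is the uniformity over $n$ of a Chernoff/Cram\'er estimate for the family of dilated sets $\{(u/n)\bar A\}$, and the two places where you invoke it do not go through as described. (i) \textbf{H1}' as stated only gives $\sup_{n\ge N}\frac1n\log\ee e^{\scal{\bs\alpha}{\bs Y_n}}=M_{\bs\alpha}<\infty$; the resulting Chernoff bound $\prob{\bs Y_n\in uH}\le e^{-u c+nM_{\bs\alpha}}$ is useless whenever $M_{\bs\alpha}>0$ and $n$ is of order $u$, so the claim that \textbf{H1}' ``makes the logarithmic moment generating functions bounded uniformly in $n$'' in the sense needed is not what the hypothesis delivers; you must instead combine $\Lambda(\bs\alpha)\le0$ (valid only for $n$ beyond some $n_1(\bs\alpha)$) with \textbf{H1}' for the finitely many remaining $n$, and then justify that a \emph{finite} family of tilts $\bs\alpha_j\in\mathscr L_0 I^*_{\bs Y}$ suffices to cover $\bar A$ with thresholds $\ge\inf_{\bar A}\tilde I_{\bs Y}-\varepsilon$. (ii) That finite cover is obtained in your sketch by ``exponential tightness to truncate to a ball,'' but exponential tightness of the laws of $\bs Y_n/n$ holds at speed $n$, not at speed $u$: for bounded $n$ and unbounded $\bar A$ the event $\{\|\bs Y_n\|>uR\}$ need not have probability $o(e^{-u c(R)})$ with $c(R)\toi$, since neither the LDP nor \textbf{H1}' controls exponential moments of $\bs Y_n$ outside the (possibly small) set $\mathscr L_0 I^*_{\bs Y}$. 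This is exactly the pathology the surrounding paper discusses at length (a single small $n$ dominating the union), and it is why Collamore's actual argument works with a compact set of \emph{directions}, using \textbf{H2} and the convexity of $\mathscr L_0 I^*_{\bs Y}$, rather than with a compact truncation of $\bar A$. Your treatment of the range $n>Tu$ can be repaired (apply the LDP upper bound once to the single closed set $\overline{\bigcup_{s>0}s\bar A}$, on which $I_{\bs Y}\ge\varepsilon_\delta$ by your \textbf{H2} argument), but the moderate range as written is not a proof.
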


\begin{remark}
\label{rem:condH2}
Condition \textbf{H2} is an admissibility condition on sets $A$. Recall that $\mathscr L_0 I_{\bs Y}$ is the set of all the points $\bs y$ for which $I_{\bs Y}(\bs y)=0$. Intuitively, these are the points of the typical behavior of $\bs Y$. Recall also that in the setting of Cram\'er's theorem, when $\bs Y_n=\bs X_1+\ldots+\bs X_n$, for an i.i.d.~sequence of random vectors $\{\bs X_n:n\in\nn\}$ in $\rr^d$, $\mathscr L_0 I_{\bs Y}=\{\ee \bs X_1\}$. Therefore, the set $\mathscr C_\delta$ can be thought of as the $\delta$-neighborhood of all such points of typical behavior of $\bs Y$, and thus $A$ can be any general set that avoids the ``central tendency'' $\cone(\mathscr C_\delta)=\{\lambda \bs x:\lambda>0,\bs x\in\mathscr C_\delta\}$ of $\bs Y$.
\end{remark}

The straightforward major differences between the current approach and the one from \citep{Collamore96} are the following. \citep{Collamore96} considers the multidimensional case and sets satisfying \textbf{H2}, but only allows linear scaling. The sequence $\bs Y$ has to satisfy \textbf{H1}' and the sequence of measures corresponding to $\bs Y_n/n$, the LDP with \textit{convex} rate function. In our setup we considered the multidimensional case and regularly varying scalings, general sets in the case of $d=1$, but only quadrants $\{\bs x\in\rr^d:\bs x>\bs q\}$, for any $\bs q>\0$, in the case of $d>1$. Furthermore, we do not require the LDP to hold, but impose the restricted LDP hypothesis allowing for \textit{continuous} rate functions. We have already explained that non-linear scalings allow to incorporate, for instance, long/short range dependent sequences stemming from, for example, fractional Brownian motion. Also, as explained in \autoref{rem:RLDP}, it is possible that the restricted LDP holds when the LDP does not and vice versa. 

Observe that, if $I_{\bs Y}$ is continuous where finite, then the restricted LDP holds with $J_{\bs Y}(\bs q)=\inf_{\bs x\ge\bs q}  I_{\bs Y}(\bs x)$. By \citep[Theorem 13.5]{Rockafellar70}, $\tilde I_{\bs Y}(\bs x)$ is equal to the closure of $L(\bs x)=\inf_{\tau>0}\tau^{-1} I_{\bs Y}(\tau\bs x)$, that is, the greatest lower semi-continuous function majorized by $L$. Furthermore, if $A=\{\bs x\in\rr^d:\bs x>\bs q\}$, for some $\bs q>\0$, satisfies \textbf{H2}, then the upper and the lower bound in \autoref{thm:collamore} are equal to
\[
\inf_{\bs x\ge\bs q} \tilde I_{\bs Y}(\bs x)=
\inf_{\bs x\ge\bs q} \inf_{\tau>0}\tau^{-1} I_{\bs Y}(\tau\bs x)=
\inf_{\tau>0}\tau^{-1}\inf_{\bs x\ge\bs q}  I_{\bs Y}(\tau\bs x)=
\inf_{\tau>0}\tau^{-1} J_{\bs Y}(\tau\bs q).
\]
Hence, if, in addition, we are allowed to take $N_0=1$, then \autoref{thm:collamore} coincides with \autoref{thm:Mcase} in the case of linear scaling and convex rate functions.

\section{Examples}
\label{sec:examples}
In this section we discuss some of the examples in which the theory of this paper can be applied. Firstly, we shall consider an example that does not fit in frameworks of any of the previous paper. This is due to its multidimensional nature and the fact that the rate function appearing there is nonconvex. Secondly, we shall consider a multidimensional Gaussian example and shall try to recover some previously known results.

Let us begin with discussing an extension of \autoref{thm:Mcase} from sequences $\{\bs W_n:n\in\nn\}$ to stochastic processes $\{\bs W_t:t\in\rr_+\}$. To this end we formulate an additional necessary hypothesis.

\vb

\noindent\textbf{Extension hypothesis.}
\[
\lim_{u\toi}\sup_{n\in\nn}\frac{\log\prob{\exists{t\in(n,n+1]}:\bs W_t>u\bs q}}{h(u)}
=
\lim_{u\toi}\sup_{n\in\nn}\frac{\log\prob{\bs W_n>u\bs q}}{h(u)}
\]

\vb

\noindent The above hypothesis was also introduced in \citep{Duffy03} in the one-dimensional case. Therein, it is argued that under this hypothesis, for the case of $d=1$, \autoref{thm:Duffy} extends from sequences $\{W_n:n\in\nn\}$ to processes $\{W_t:t\in\rr_+\}$. It is straightforward to conclude from the proof of \autoref{thm:Mcase} that this is also the case if $d>1$.

\subsection{Application to heavy tailed processes}
Let us consider  processes of the type described in \citet[Section 3.2]{Duffy03}. To this end, we first introduce the heavy tailed distribution $\mathcal H$
by
\[
\prob{\mathcal H\ge x}=l(x)e^{-v(x)},
\]
where $l$ is a slowly varying function and $v\in\RV(V)$ with $V\in(0,1)$. 

Now consider a continuous time process $\{Y_t:t\in\rr_+\}$ taking the values $0$ and $1$, with the times spent in the 0 and 1 states being a sequence of i.i.d.~random variables with the same distribution as $\mathcal W$. For $c>0$, define a process $\{Z_t:t\in\rr_+\}$ via
\begin{equation}
\label{eq:defW}
Z_t=\int_0^t(Y_s-c){\rm d}s.
\end{equation}
It was shown by \citet{Duffy08} that the family of probability measures $\{\mu_{t}:t\in\rr_+\}$, where $\mu_t$ is the law of $Z_t/t$, satisfies the LDP with speed function $v(t)=t^V$ and
the (nonconvex!) rate function
\begin{equation}
\label{eq:ratef}
I_c(x) = 
\left\{
\begin{array}{ll}
(1-2(x+c))^V&\mbox{if $x\in [-c,\frac{1}{2}-c],$}\\
(2(x+c)-1)^V&\mbox{if $x\in[\frac{1}{2}-c,1-c],$}\\
\infty&\mbox{otherwise.}
\end{array}
\right.
\end{equation}

Now consider two independent processes $\{Y^{1}_t:t\in\rr_+\}$ and $\{Y^{2}_t:t\in\rr_+\}$ defined as above and construct the corresponding processes $Z^{1}=\{Z^{1}_t:t\in\rr_+\}$ and $Z^2=\{Z^{2}_t:t\in\rr_+\}$ via \eqref{eq:defW} with constants $c_1>0$ and $c_2>0$, respectively. Now let $\bs Z=\{\bs Z_t:t\in\rr_+\}$, where $\bs Z_t=(Z^1_t,Z^2_t)$. Note that the rate function $I_{\bs Z}$ corresponding to $\bs Z$ is given by $I_{\bs Z}(\bs x)=I_{c_1}(x_1)+I_{c_2}(x_2)$, where $I_{c_1}$ and $I_{c_2}$ are given by \eqref{eq:ratef}. Finally, define a new process $\bs W=\{\bs W_t:t\in\rr_+\}$ via $(W^1_t,W^2_t)=(Z^1_t,Z^1_t+Z^2_t)$. According to the contraction principle \citep[Theorem 4.2.1]{Dembo98}, the family of probability measures $\{\mu^{\bs W}_t:t\in\rr_+\}$ on $\rr^2$, where $\mu_t^{\bs W}$ is the law of $\bs W_t/t$, satisfies the LDP with speed $v(t)=t^V$ and rate function $I_{\bs W}$ given by
\[
I_{\bs W}(\bs x)=\inf_{\bs v\in\rr^2:(v_1,v_2)=(x_1,x_2-x_1)}\left(I_{c_1}(v_1)+I_{c_2}(v_2)\right)=I_{c_1}(x_1)+I_{c_2}(x_2-x_1).
\] 
Hence $\bs W$ satisfies the restricted LDP hypothesis with rate function $J_{\bs W}(\bs q)=\inf_{\bs x\ge \bs q} I_{\bs W}(\bs x)$, $\bs q\ge\0$.

Let $\hat q_1=q_1+c_1$ and $\hat q_2=q_2+c_1+c_2$. Elementary calculus reveals that, with $\bs q\ge \0$, 
%
%
%
$$
J_{\bs W}(\bs q)=
\left\{
\begin{array}{l}
J_{\bs W}^1(\bs q)\\
J_{\bs W}^2(\bs q)\\
J_{\bs W}^3(\bs q)\\
\infty
\end{array}
\right.
=
\left\{
\begin{array}{l}
I_{\bs W}(q_1,q_1+\half-c_2)\\
I_{\bs W}(q_2+c_2-\half,q_2)\\
I_{\bs W}(1-c_1,q_2)\\
\infty
\end{array}
\right.
=
\left\{
\begin{array}{ll}
I(\hat q_1)&\mbox{if $\hat q_1\in(c_1, 1], \hat q_2\in(c_1+c_2, \half +\hat q_1]$,}\\
I(\hat q_2-\half )&\mbox{if $\hat q_1\in(c_1, 1], \hat q_2\in[ \half+\hat q_1,\frac{3}{2}]$,}\\
1+I(\hat q_2-1)&\mbox{if $\hat q_1\in(c_1, 1], \hat q_2\in[\frac{3}{2},2]$,}\\
\infty&\mbox{if $\hat q_1>1$ or $\hat q_2>2$,}
\end{array}
\right.
$$
where $I=I_0$. Recall that $c_1,c_1\in(\half,1)$, thus we do not define $J_{\bs W}(\bs q)$ for $\hat q_1\le\half$, $\hat q_2\le 1$. Note also that, if $c_1+c_2\in[\frac{3}{2},2)$, then $\hat q_2\ge\frac{3}{2}$, so that the first two cases in the definition of $J_{\bs W}$ do not occur. Finally, one can easily check that $J_{\bs W}$ is continuous on the interior of the set where it is finite and that $J_{\bs W}(\bs 0)>0$. Hence the stability and the continuity hypothesis holds.
\definecolor{L-gray}{gray}{0.95}
\definecolor{LL-gray}{gray}{0.9}
\definecolor{LLL-gray}{gray}{0.85}

{\footnotesize
\begin{figure}[H]
\bigskip
\begin{tikzpicture}
\draw[->] (0,-0.67)--(0,5);
\draw[->] (-0.5,0)--(5,0);
\draw (5,0) node[below] {$q_1$};
\draw (0,5) node[left] {$q_2$};

\draw (0,4) --(4,4);
\draw (0,4) node[left] {\scriptsize $2-c_1-c_2$};

\draw (0,1.33) -- (5,1.33);
\draw (0,1.33) node[left] {\scriptsize $\frac{3}{2}-c_1-c_2$};
\draw (4,0) node[below]{\scriptsize $1-c_1$};

\draw [black, fill=L-gray] (0,1.33) -- (4,1.33) -- (4, 4) -- (0,4) -- (0,1.33);

\draw [black, fill=LL-gray] (0,0) -- (2.67,0) -- (4, 1.33) -- (0,1.33) -- (0,0);

\draw [black, fill=LLL-gray]  (2.67,0) -- (4, 1.33) -- (4,0) -- (2.67,0);

\draw (4.09, 0.35) node[left]{\scriptsize $J^1_{\boldsymbol W}({\boldsymbol q})$};
\draw (2.39, 0.65) node[left]{\scriptsize $J^2_{\boldsymbol W}({\boldsymbol q})$};
\draw (2.39, 2.65) node[left]{\scriptsize $J^3_{\boldsymbol W}({\boldsymbol q})$};

\draw (4,0) -- (4,4);

\draw(2,-0.67) -- (5,2.33);

\draw (2.5,-1.25) node[below] {Fig.\ 1: Definition of $J_{\boldsymbol W}({\boldsymbol q})$; case $c_1+c_2<\frac{3}{2}$. The rate function is $\infty$ for $q_1,q_2>0$ outside the shaded regions.\:\:\:};
\end{tikzpicture}
\end{figure}}

The coordinates of $\bs W$ are bounded above (by $1-c_1$ and $2-c_1-c_2$ respectively), the uniform decay rate hypothesis is satisfied. Note also that, for every $n$ and $t\in(n,n+1]$, $\bs W_t=\bs W_n+\bs R_{n,t}$, where $\bs R_{n,t}=(R_{n,t}^1, R_{n,t}^1)$ with both $R^1_{n,t}=\int_n^t(Y_s^1-c_1)\D s$ and $R^2_{n,t}=\int_n^t(Y_s^1+Y_s^2-c_1-c_2)\D s$ bounded independently of $t$. This immediately implies the extension hypothesis too. Therefore, the extended version of \autoref{thm:Mcase} applies. It gives, for every $\bs q>\0$,
$$
\lim_{u\toi}\frac{1}{u^V}\log\prob{\exists{t\in\rr_+}:\int_0^t(Y_s^1-c_1)\D s>uq_1,\int_0^t(Y_s^1+Y_s^2-c_1-c_2)\D s>uq_2}
=-\inf_{t\ge 0}\frac{J_{\bs W}(t\bs q)}{t^V}.
$$
Let $h(t)=\frac{J_{\bs W}(t\bs q)}{t^V}$, then with self-evident notation,
if  $c_1+c_2\in[\frac{3}{2},2)$,
$$
\inf_{t\ge 0}h(t)=
\left\{
\begin{array}{ll}
h^3\left(\frac{1-c_1}{q_1}\right)\\
h^3\left(\frac{2-c_1-c_2}{q_2}\right)
\end{array}
\right.
=
\left\{
\begin{array}{ll}
\left(\frac{q_1}{1-c_1}\right)^V
\left(1+\left[2\frac{q_2}{q_1}(1-c_1)+2(c_1+c_2)-3\right]^V\right),
&\frac{q_1}{q_2}\ge\frac{1-c_1}{2-c_1-c_2},
\\
2\left(\frac{q_2}{2-c_1-c_2}\right)^V,
&\frac{q_1}{q_2}\le\frac{1-c_1}{2-c_1-c_2},
\end{array}
\right.
$$
and if $c_1+c_2\in(1,\frac{3}{2})$,
\[
\inf_{t\ge 0}h(t)=
\left\{
\begin{array}{ll}
h^1\left(\frac{1-c_1}{q_1}\right)\\
h^2\left(\frac{\frac{3}{2}-c_1-c_2}{q_2}\right)
\end{array}
\right.
=
\left\{
\begin{array}{ll}
\left(\frac{q_1}{1-c_1}\right)^V
&\frac{q_1}{q_2}\ge\frac{1-c_1}{\frac{3}{2}-c_1-c_2},\\
\left(\frac{q_2}{\frac{3}{2}-c_1-c_2}\right)^V
&\frac{q_1}{q_2}\le\frac{1-c_1}{\frac{3}{2}-c_1-c_2}.\\
\end{array}
\right.
\]
\subsection{Application to Gaussian processes}
In this subsection we consider an example from \citet[Section 3.2]{Debicki10b}.
That is, let $\bs Y=\{\bs Y(t):t\in\rr_+\}$ be a centered Gaussian process in $\rr^d$ with stationary increments and covariance matrix $\Sigma_t=\diag(c_1\sigma^2(t),\ldots,c_d\sigma^2(t))$, so that the coordinates of $\bs Y(t)=( Y_1(t),\ldots, Y_d(t))$ are independent and, for each $i=1,\ldots,d$, $\Var(Y_i(t))=c_i\sigma^2(t)$, for some $c_i>0$, and $\sigma^2\in\RV(\gamma)$, where $\gamma\in(0,2)$; compare these assumptions with assumptions \textbf{C1}-\textbf{C3} of \citep[Section 3.2]{Debicki10b}. 

For an invertible matrix $S$, define a new Gaussian process $\bs W=\{\bs W(t): t\in\rr_+\}$ in $\rr^d$ via $\bs W(t)=S\bs Y(t) - \bs i(t)$, where $\bs i:\rr\to\rr^d$ is such that $\bs i(t)=(t,\ldots,t)$. Set $a(t)=t$ and $v(t)=t^2\sigma^{-2}(t)$ and notice that, 
\begin{align*}
\Lambda_t(\bs\alpha)&=
\log\ee\exp\left(\scal{\bs\alpha}{\frac{\bs W(t)}{a(t)}}\right)
=
\frac{1}{2}\scal{\bs\alpha}{\frac{S\Sigma_t S^T}{t^2}\bs\alpha}-\scal{\bs\alpha}{\bs i(1)},
\end{align*}
so that the CGF variant of the uniform individual decay rate hypothesis holds with $F'=2$.
Furthermore,
\[
\Lambda(\bs\alpha)=\lim_{t\toi}\frac{1}{v(t)}\Lambda_t(v(t)\bs\alpha)=
\frac{1}{2}\scal{\bs\alpha}{S C S^T\bs\alpha}-\scal{\bs\alpha}{\bs i(1)},
\]
where $C=\diag(c_1,\ldots,c_d)$, so that the CGF variant of the LDP hypothesis holds and the RLDP is satisfied with
\[
J_{\bs W}(\bs q)=\inf_{\bs x\ge \bs q}\Lambda^*(\bs x)=
\frac{1}{2}\inf_{\bs x\ge \bs q}\scal{S^{-1}(\bs x+\bs i(1))}{ C^{-1} S^{-1}(\bs x+\bs i(1))}, 
\]
where the form of $\Lambda^*$ follows from \citet[Theorem 12.3]{Rockafellar70}. 
From this formula it follows that the stability and continuity hypothesis holds. 
Finally, by the stationarity of increments of $\bs Y$, for any $\varepsilon>0$,
\begin{equation}
\label{eq:1}
\prob{\exists{t\in(n,n+1]}: \bs W(t)>u\bs q}
\le
\prob{\exists{t\in[0,1]}: \bs W(t)>u\varepsilon\bs q}+\prob{\bs W(n)>u(1-\varepsilon)\bs q}.
\end{equation}
Now, for any $\bs x>\0$, define a new Gaussian process via $Z(t)=\frac{\scal{\bs SY(t)}{\bs x}}{\scal{\bs x}{\varepsilon\bs q}}$. Using Borell's inequality, see, e.g., \citep[Theorem 2.1]{Adler90}, 
\begin{equation}
\label{eq:2}
\log\prob{\exists{t\in[0,1]}: \bs W(t)>u\varepsilon\bs q}
\le\log\prob{\exists{t\in[0,1]}: Z(t)>u}
\le -\frac{(u-\mu)^2}{2\sigma^2},
\end{equation}
where $\mu=\ee\sup_{t\in[0,1]} Z(t)$ and $\sigma^2=\sup_{t\in[0,1]}\Var(Z(t))$. Combining \eqref{eq:1} and \eqref{eq:2} one can retrieve the extension hypothesis after proper optimization in
$\varepsilon\to 0$. Hence, the extended version of \autoref{thm:Mcase} implies that,
\begin{equation}
\label{eq:dummyq}
\lim_{u\toi}\frac{\sigma^2(u)}{u^2}\log\prob{\exists{t\in\rr_+}:\bs W(t)>u\bs q}= -\half\inf_{c\ge 0}\inf_{\bs x\ge c\bs q}\frac{\scal{S^{-1}(\bs x+\bs i(1))}{ C^{-1} S^{-1}(\bs x+\bs i(1))}}{c^{2-\gamma}}.
\end{equation}
The change of variable $c\mapsto t^{-1}$ gives
\begin{align*}
\inf_{c\ge 0}\inf_{\bs x\ge c\bs q}\frac{\scal{S^{-1}(\bs x+\bs i(1))}{ C^{-1} S^{-1}(\bs x+\bs i(1))}}{c^{2-\gamma}}
&=
\inf_{c\ge 0}\inf_{\bs x\ge \bs q}\frac{\scal{S^{-1}(\bs x+\bs i(c^{-1}))}{ C^{-1} S^{-1}(\bs x+\bs i(c^{-1}))}}{ c^{-\gamma}}\\
&=
\inf_{t\ge 0}\inf_{\bs x\ge \bs q}\frac{\scal{S^{-1}(\bs x+\bs i(t))}{ C^{-1} S^{-1}(\bs x+\bs i(t))}}{t^\gamma}.
\end{align*}
Therefore, \eqref{eq:dummyq} coincides with \citep[Proposition 2]{Debicki10b}. 

\newpage
\small\bibliography{biblioteczka.MLDP}
\end{document}